\tikzstyle{ann} = [fill=white,inner sep=1pt]
\newtheorem{theorem}{Theorem}[section]
\newtheorem{lemma}[theorem]{Lemma}
\newtheorem{remark}[theorem]{Remark}
\newtheorem{cor}[theorem]{Corollary}
\newtheorem{defn}[theorem]{Definition}
\def\Sum{\sum\nolimits}
\def\Prod{\prod\nolimits}
\def\P{\mathbb{P}}
\def\E{\mathbb E}
\def\1{{\bf 1}}
\def\question{\medbreak
        \global \advance \questionno 1
        \textbf{Problem \the\questionno}.\enspace \ignorespaces}
\newcommand{\remove}[1]{}
  \DeclareMathOperator{\V}{Var} 
  \DeclareMathOperator{\Cov}{Cov} 
\title{Branching random walk in the presence of a hard wall}
\author{ Rishideep Roy\thanks{ Email: rishideeproy@gmail.com} }
\begin{document}
\maketitle  
\begin{abstract}
We consider a branching random walk on a $d$-ary tree of height $n$ ($n \in \mathbb{N}$), under the presence of a hard wall which restricts each value to be positive, where $d$ is a natural number satisfying $d\geqslant2$.  The question of behaviour of Gaussian processes with long range interactions, for example the discrete Gaussian free field, under the condition that it is positive on a large subset of vertices, and a relation with the expected maximum of the processes has been observed. We find the probability of the  event that the branching random walk is positive at every vertex in the $n^{th}$ generation, and show that the conditional expectation of the Gaussian variable at a typical vertex, under positivity, is less than the expected maximum by order of $\log n$.  
\end{abstract}

\section{Introduction} 

We consider a tree of $n$ levels, where $n \in \mathbb{N}$. We assume that the root node of this tree has $d$ number of children, who in turn have $d$ number of children each, and so on, till generation $n$, which are leaf nodes. Here we assume that $d \in \mathbb{N}$, with $d\geqslant 2$, since the case $d=1$ is really trivial. 
This is a $d$-ary tree, and we refer to it as $T^n$. We refer to the subset of all the leaf nodes of this tree by $T_n$. So $\mid T_n \mid = d^n$. 

We consider a particle starting from $0 \in \mathbb{R}$, which dies at time $1$, and splits into $d$ number of children. Each of these children travels a distance given by independent standard Gaussian random variables. Then at time $2$, each of these children dies, and give rise to $d$ number of children each, which in turn follow the same process, the displacements at each step being independent of the displacements in the previous time points.  At time $n$ we have $d^n$ many particles, each having a displacement. The positions of these particles are collectively called branching random walk at time $n$.
We can equivalently define a  branching random walk (BRW) as a Gaussian process on $T^n$. The origin stands for the root, and the displacements at each step are attached to the edges. So, the $d$ displacements at first generation are attached to the edges between the root and its $d$ children. To each vertex we attach a quantity equal to sum of the Gaussian variables that we encounter while looking at the shortest path between itself and the root. The collection of displacements of the $d^n$ particles at time $n$ is given by the Gaussian variables attached to all the vertices in $T_n$. We denote it by $\{ \phi_v^n : v \in T_n \}$. This is the branching random walk at time $n$. Two particles at time $n$ having the last common ancestor at time $k$ $(k \leq n)$ is equivalent to two leaf nodes, which have branched out from the same vertex at level $k$ of the tree. Each of these displacements are sums of $n$ independent standard Gaussian random variables. Figure \ref{BRWbinary} gives a pictorial representation of the branching random walk for the case $d=2$, i.e. on a binary tree.  The collection $\{ X_{i,j}: j=1,2,\ldots, d^i, i =1,2,\ldots, n \}$ represents independent displacements, and are i.i.d standard Gaussian random variables. There are $d^n$ many leaf nodes in the tree, and we can fix an ordering of the vertices from $1$ to $d^n$. For any $v \in T_n$, i.e. $v \in \{1,2,\ldots, d^n \}$, we define $a_i(v)=\lceil \frac{v}{d^{n-i}}\rceil$ for $i=1,2,\ldots, n$. Then we can define $\phi_n^v=\sum_{j=1}^n X_{j,a_j(v)}$. This is another way of constructing the BRW.

\begin{figure}[h!]\label{BRWbinary}
	\tikz[scale=0.75]{
		\draw[black, thick] (9,9) -- (6,6);
		\draw[black, thick] (6,6) -- (4.5,4);
		\draw[black, thick] (6,6) -- (7.5,4);
		\draw[black, dashed] (4.5,4) -- (3.5,2);			
		\draw[black, dashed] (4.5,4) -- (5.5,2);
		\draw[black, thick] (3.5,2) -- (4.5,0);
		\draw[black, thick] (3.5,2) -- (2.5,0);
		\draw[black, dashed] (7.5,4) -- (8.5,2);			
		\draw[black, dashed] (10,4) -- (9,2);
		\draw[black, dashed] (5.5,0) -- (12.5,0);
		\draw[black, dashed] (14,4) -- (16,2);			
		\draw[black, dashed] (14,4) -- (12.5,2);
		\draw[black, thick] (12,6) -- (14,4);
		\draw[black, thick] (12,6) -- (10,4);
		\draw[black, thick] (9,9) -- (12,6);
		\draw[black, thick] (16,2) -- (13.5,0);
		\draw[black, thick] (16,2) -- (17,0);
		\draw[black, thick, fill=white] (9,9) circle[radius=.45cm];				
		\draw[black, thick, fill=white] (6,6) circle[radius=.55cm];			
		\draw[black, thick, fill=white] (12,6) circle[radius=.55cm];
		\draw[black, thick, fill=white] (4.5,4) circle[radius=.45cm];
		\draw[black, thick, fill=white] (4.5,0) circle[radius=.45cm];				
		\draw[black, thick, fill=white] (2.5,0) circle[radius=.45cm];	
		\draw[black, thick, fill=white] (13.5,0) circle[radius=.45cm];				
		\draw[black, thick, fill=white] (17,0) circle[radius=.45cm];
		\draw[black, thick, fill=white] (14,4) circle[radius=.45cm];
		\node[ann] at (9,9 ) {$0$};	
		\node[ann] at (7.5,7.5) {$X_{1,1}$};
		\node[ann] at (6,6) {$X_{1,1}$};
		\node[ann,scale=.5] at  (4.5,4)  {$X_{1,1}+X_{2,1}$};
		\node[ann,scale=.5] at  (14,4)  {$X_{1,2}+X_{2,4}$};
		\node[ann] at (12,6 ) {$X_{1,2}$};
		\node[ann] at (5,5) {$X_{2,1}$};
		\node[ann] at (7,5) {$X_{2,2}$};
		\node[ann] at (11, 5) {$X_{2,3}$};	
		\node[ann] at (13, 5) {$X_{2,4}$};	
		\node[ann] at (3, 1) {$X_{n,1}$};	
		\node[ann] at (4, 1) {$X_{n,2}$};
		\node[ann] at (16.5, 1) {$X_{n,2^n}$};	
		\node[ann] at (14.5, 1) {$X_{n,2^n-1}$};
		\node[ann] at (2.5, -.65) {$1$};
		\node[ann] at (4.5, -.65) {$2$};
		\node[ann] at (13.5, -.65) {$2^n-1$};
		\node[ann] at (17, -.65) {$2^n$};
		\node[ann] at (2.5, 0) {$\phi^n_{1}$};	
		\node[ann] at (4.5, 0) {$\phi^n_{2}$};
		\node[ann] at (13.5, 0) {$\phi^n_{2^n-1}$};
		\node[ann] at (17, 0) {$\phi^n_{2^n}$};			
		\node[ann,scale=.75] at (0.75, 0) {$\phi^n_{1}=\sum_{j=1}^{n}X_{j,1}$};			
		\node[ann] at (10.25, 7.5) {$X_{1,2}$};		
	}
	\caption{BRW on  binary tree }
\end{figure}

The covariance structure of this Gaussian process is given by the following:
\begin{equation}\label{eq-brw-definition}
\begin{split}
\V (\phi_v^n) &= n \quad \text{for all } v \in T_n \\
\Cov(\phi_u^n, \phi^n_v) &=  n - \frac{1}{2}d_T(u,v) \quad \text{ for all }u \neq v \in T_n \,.
\end{split}
\end{equation}
where $d_T$ denotes the graph distance. So essentially the BRW is a multivariate normal distribution of dimension $d^n$, with all means $0$, and variances and covariances given by (\ref{eq-brw-definition}). We call the corresponding probability measure  $\mathbb{P}(\cdot)$. 

We wish to find bounds on the order of the probability of a branching random walk being positive at the leaf nodes ($v \in T_n$). This is also the event that all the particles are on the right of the starting point of the first particle. We also wish to find the expected value of the field at a typical vertex in generation $n$, under the condition that the BRW is positive at all the leaf nodes.
The behaviour that we are considering is that of entropic repulsion for this Gaussian field, which is its phenomenon of drifting away when pressed against a hard wall so as to have enough room for local fluctuations, as is referred to in \cite{lebowitz87}. The phenomenon of entropic repulsion for the Gaussian free field (GFF) has been studied in literature for some time now. The entropic repulsion for infinite GFF on $\mathbb{Z}^d$, $d\geq 3$ has been studied in \cite{bolthausen1995}. As a continuation to this, the GFF on a finite box with Dirichlet boundary conditions, for dimension $3$ or more was studied in \cite{deuschel96}. In case of the GFF on finite box, the positivity was looked at from two different angles, one involving the interior only, while the other considers the whole box. Both looked at the phenomenon of positivity of the field in a box of size $N$. Though the typical behaviour of a vertex was similar, this order was not so, when positivity for the entire box was considered. But on removing the positivity condition for a layer near the box, the order was same as in \cite{bolthausen1995}. It was also stated in \cite{BDG01} that the probability of positivity in case of GFF in a  box of dimension $2$ decays exponentially, and this is really a boundary phenomenon. So in order to look into the long range correlations, and local fluctuations, the boundary effect has to be removed. This approach has been taken in \cite{BDG01} to look into the behaviour of a typical vertex when pressed against this hard wall for a GFF.

Studies on GFF in a box of size $n$ in dimension $2$ since Bolthausen et. al. in \cite{BDG01} have utilised the covariance of GFF in the interior of the box.  The connection between the covariance structure of 2D-GFF and BRW was made in \cite{BZ10} to show tightness for the maximum of the GFF. It has been observed to be log-correlated. To further refine the results on entropic repulsion of the GFF in dimension $2$ it is imperative to consider a similar behaviour for the BRW on a tree. Our calculations heavily rely on the tail behaviour of BRW, as shown in Section \ref{sec:brwlefttail}. The connection between the tail behaviours of 2D-GFF and BRW have already been mentioned above. The multi-scale analysis, hinting towards the tree structure is made use of extensively to study the extremal properties of GFF in dimension $2$, as shown in \cite{BZ10}, \cite{BDZ13}, \cite{BDZ14}. Similar strategies have been applied to study the entropic repulsion of Gaussian membrane model for the critical dimension $4$ in \cite{Kurt09}. It has been worked out in \cite{R16}, \cite{Schweiger} that the Gaussian membrane model in the critical dimension is log-correlated. The works of \cite{DRZ15} further exhibit strong relations between the BRW and log-correlated Gaussian fields, the branching number varying according to the dimension of the box. 

In the backdrop of these studies, we consider the behaviour at a typical vertex of branching random walk on a $d-$ary tree. This is specially relevant keeping in mind the covariance structure of the BRW and that of the GFF in dimension 2, in the interior.

Entropic repulsion in case of GFF on Sierpinski carpet graphs has been covered in \cite{ChenU13}. More recently entropic repulsion in $|\nabla \phi|^p$ surfaces has been considered in \cite{CMT17}.  

We are interested in  $\P ( \phi_v^n \geq 0 ~ \forall v \in T_n)$ as well as $\E( \phi_u^n \mid \phi_v^n \ge 0 ~ \forall v \in T_n )$. We are essentially interested in the conditional distribution of BRW at a typical vertex under the condition of positivity at all vertices of level $n$. The computation of the expectation is the first step in that direction.

In regard to the behaviour of the branching random walk in presence of a hard wall, we recall similar results for other Gaussian processes such as \cite{Deuschel99}, \cite{Deuschel2000}, \cite{BDG01}, \cite{Kurt09}, \cite{ChenU13}, \cite{CMT17}. The leading order term in the exponent of the probability of positivity is what is estimated, while we estimate both the leading order term and the second leading term in the exponent. This also helps us in finding the second order term in the expected value of a typical vertex, under the hard wall condition. 

We know from \cite[Theorem 4]{ofernotes} that $\E(\max_{v \in T_n} {\phi}_v)$ is of the form $c_1 n - c_2 \log n + O(1)$. We define $m_n =c_1 n - c_2 \log n$, and $\sigma_{d,n}^2 = \frac{1-d^{-n}}{d-1}$. In fact we have explicit values of $c_1$ and $c_2$ as $\sqrt{2 \log d}$ and $\dfrac{3}{2\sqrt{2 \log d}}$ respectively. 

Our main result of this paper, in regard to the probability of positivity, is the following:

\begin{theorem}[Positivity probability]\label{thm-positivityBRW}
	There exists $\lambda'=\frac{\sqrt{2}\log n}{\sqrt{\log d}}+O(1)$, such that for $n$ sufficiently large we have,  for $K_1, K_2, K_3 > 0$ independent of $n$, and $K_4 = \frac{1}{c\sigma^2_{d,n}\log d}$, 
	\begin{equation}\label{eq-positivity} 
	K_1 e^{-\frac{1}{2\sigma^2_{d,n}}(m_n - \lambda')^2- K_3(m_{n} - \lambda') }\le \P( \phi_v^n \geq 0~ \forall v \in T_n ) \le K_2 e^{-\frac{1}{2\sigma^2_{d,n}}(m_{n} - \lambda')^2 - K_4 (m_{n} - \lambda').}
	\end{equation}
\end{theorem}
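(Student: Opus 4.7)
My approach reduces the problem to a one-dimensional Gaussian integral by conditioning on the global leaf average $\bar\phi := d^{-n}\sum_{|v|=n}\phi_v^n$, whose variance a direct symmetric computation identifies as $\V(\bar\phi) = \sigma^2_{d,n}$. By the tree symmetry the conditional law of $\phi^n$ given $\bar\phi = t$ is Gaussian with leaf mean $t$ everywhere and reduced covariance $\tilde\Sigma := \Sigma - \sigma^2_{d,n}\mathbf{1}\mathbf{1}^T$; letting $\tilde\phi$ denote the associated centered conditional field, we obtain
\begin{equation*}
\P(\phi^n_v \ge 0 \; \forall v) \;=\; \int_{-\infty}^{\infty} \P(\tilde\phi \ge -t\mathbf{1})\, f_{\bar\phi}(t)\, dt, \qquad f_{\bar\phi}(t) = \frac{1}{\sqrt{2\pi}\,\sigma_{d,n}}\,e^{-t^2/(2\sigma^2_{d,n})}.
\end{equation*}
Since $\tilde\phi$ is again a BRW-like log-correlated field, by symmetry $\P(\tilde\phi \ge -t\mathbf{1}) = \P(\max\tilde\phi \le t)$, which transitions from near $0$ to near $1$ across a window of width $O(\log n)$ around $t \asymp m_n$.

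For the upper bound I split the integral at $t^* := m_n - \lambda'$. On $\{t \ge t^*\}$, bound $\P(\tilde\phi \ge -t\mathbf{1}) \le 1$; the Gaussian Mills-ratio on $f_{\bar\phi}$ then contributes $e^{-(m_n - \lambda')^2/(2\sigma^2_{d,n})} \cdot O(\sigma^2_{d,n}/(m_n - \lambda'))$. For $t < t^*$, use a careful upper bound on $\P(\max\tilde\phi \le t)$ obtained by splitting the tree at depth $k^* := n - \lambda'$: the event forces the partial sums $\psi_w$ at level $k^*$ to satisfy $\psi_w \ge M_w - t$, where $M_w$ is the sub-tree maximum concentrated near $m_{\lambda'}$; the Bramson-type right-tail $\P(M_w > m_{\lambda'} + s) \le C\,e^{-s/(c\sigma^2_{d,n}\log d)}$ combined with independence across the $d^{k^*}$ sub-trees produces a Gaussian integral over the top BRW whose combined value is $e^{-(m_n - \lambda')^2/(2\sigma^2_{d,n})} \cdot e^{-(m_n - \lambda')/(c\sigma^2_{d,n}\log d)}$. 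The choice $d^{c\lambda'} \asymp n$ is precisely the balance point between the right-tail slope and the union-bound cost over sub-trees.

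For the lower bound I restrict the integral to $t \in [m_n - \lambda', m_n - \lambda' + O(1)]$, on which $f_{\bar\phi}(t) \asymp e^{-(m_n - \lambda')^2/(2\sigma^2_{d,n})}/\sigma_{d,n}$; the remaining task is to show $\P(\tilde\phi \ge -t\mathbf{1}) \ge e^{-K_3'(m_n - \lambda')}$ for some $K_3'$. I would do this via a multi-scale barrier construction: require the ancestral partial sums $\psi_w$ at each depth $k$ to lie in a prescribed $O(1)$-wide window around a piecewise linear interpolation between $0$ (at the root) and $t \approx m_n - \lambda'$ (at the leaves). A Mogulskii-type ballot estimate bounds each single-spine in-tube probability by $e^{-\Theta(m_n - \lambda')}$, and since the $d^{\lambda'} \asymp n^{1/c}$ bottom sub-trees of depth $\lambda'$ each have uniformly positive probability of their own maxima falling under the terminal barrier value, independence across sub-trees yields the required lower bound.

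The main obstacle is obtaining the sharp linear-in-$(m_n - \lambda')$ corrections and matching them on the two sides. The leading Gaussian factor $e^{-(m_n - \lambda')^2/(2\sigma^2_{d,n})}$ comes for free from $f_{\bar\phi}$, but the upper-bound constant $1/(c\sigma^2_{d,n}\log d)$ depends on the Bramson-type right-tail slope of the BRW maximum (tied, via the derivative martingale, to $\log d$), and the lower-bound barrier construction requires careful bookkeeping so that single-spine Mogulskii estimates combine cleanly with the $d^{\lambda'}$ independent bottom sub-tree events. The choice of $\lambda'$ via $d^{c\lambda'} \asymp n$ is the unique tuning that makes both sides match at this order.
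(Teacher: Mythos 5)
Your reduction to a one--dimensional integral by conditioning on the leaf average is exactly the paper's mechanism: the switching--sign decomposition there is precisely $\phi^n \overset{d}{=} \tilde\phi^n + X$ with $X$ the leaf average, $\V(X)=\sigma^2_{d,n}$, so that step (and your variance computation) is sound. The problems are in how you bound the two pieces of the integral. For the upper bound, splitting at $t^*=m_n-\lambda'$ and using $\P(\max\tilde\phi\le t)\le 1$ on $\{t\ge t^*\}$ gives a contribution of order $\sigma_{d,n}(m_n-\lambda')^{-1}e^{-(m_n-\lambda')^2/(2\sigma^2_{d,n})}$, i.e.\ the bare Gaussian tail with only a polynomial prefactor. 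The claimed upper bound carries the extra factor $e^{-(m_n-\lambda')/(c\sigma^2_{d,n}\log d)}$, which is $e^{-\Theta(n)}$ since $m_n-\lambda'\asymp n$ and $\sigma^2_{d,n}$ is bounded; so your bound on this region exceeds the right-hand side of (\ref{eq-positivity}) by a factor $e^{\Theta(n)}$ and the theorem does not follow. The trivial bound can only be used for $t\gtrsim m_n$; on the whole window $t=m_n-\lambda$, $0\le\lambda\lesssim\log n$, one needs the doubly exponential left-tail estimate $K'e^{-K''d^{c\lambda}}\le\P(\max_v\tilde\phi_v\le m_n-\lambda)\le C'e^{-C''d^{c\lambda}}$ (the paper's (\ref{eq-lefttailssbrw})), and then one maximizes $e^{-C''d^{c\lambda}-(m_n-\lambda)^2/(2\sigma^2_{d,n})}$ over $\lambda$: the stationarity equation $m_n-\lambda'=\sigma^2_{d,n}C''c\,d^{c\lambda'}\log d$ is what simultaneously yields $d^{c\lambda'}\asymp n$ and the constant $1/(c\sigma^2_{d,n}\log d)$. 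Your proposed source for that constant, a ``Bramson-type right tail'' with slope $1/(c\sigma^2_{d,n}\log d)$, is not correct: the right-tail exponent of the BRW maximum is $\sqrt{2\log d}$, and in any case the second-order term here comes from the left tail of the maximum, not a right-tail estimate. (Also the constraint in your tree-splitting should read $\psi_w\le t-M_w$, not $\psi_w\ge M_w-t$.)

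The lower-bound construction also fails as described. Forcing all ancestral partial sums to stay within $O(1)$ of a line rising from $0$ to $t\approx m_n-\lambda'$ forces every one of the $d^n$ leaves to be within $O(1)$ of $m_n-\lambda'$, an event of probability $e^{-\Theta(d^n)}$ (for the centered sign-switching field it is essentially impossible, since the $d$ sibling increments at each node sum to zero and cannot all track a positive slope); this is far too small compared with the required $e^{-K_3(m_n-\lambda')}=e^{-\Theta(n)}$. What is actually needed is $\P(\max_v\tilde\phi_v\le m_n-\lambda')\ge e^{-\Theta(d^{c\lambda'})}=e^{-\Theta(n)}$, obtained by constraining only the $\Theta(d^{c\lambda'})\asymp n$ Gaussians in the top $c\lambda'$ levels to $O(1)$ windows around $0$ and then using that each of the $d^{c\lambda'}$ independent subtrees of height $n-c\lambda'$ keeps its maximum below $m_{n-c\lambda'}-1$ with probability bounded away from zero; note the cut must be at depth $c\lambda'$ with $c=1/\sqrt{2\log d}$, so that $m_{n-c\lambda'}\approx m_n-\lambda'$, not at depth $\lambda'$ or $n-\lambda'$ as in your sketch. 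With these repairs your outline would essentially coincide with the paper's proof.
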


In \cite{BDG01} it has been shown that the conditional expectation under positivity is roughly close to the expected maximum for the discrete GFF in 2 dimensions. Similarly in \cite{Kurt09} for the membrane model in dimension $d=4$ a lower bound on the conditional expectation of a typical vertex, under positivity is computed to be close to the expected maximum. Here, however we show that for a branching random walk the conditional expectation is at least a constant times $\log n$ less than the expected maximum. The second main result of this paper is: 
\begin{theorem}[Expected value]\label{thm-typicalpos} We have for  $u \in T_n$, and $n$ sufficiently large enough, 
	$$m_n - \frac{3\sqrt{2}}{\sqrt{\log d}} \log n+ O(1) \le \E\left( \phi_u^n \mid \phi_v^n \ge 0 ~ \forall v \in T_n \right) \le m_n - \frac{\sqrt{2}}{\sqrt{\log d}}\log n+ O(1). $$ 
\end{theorem}
The approach that we take for proving this is that we raise the average value of the Gaussian process and then multiply a compensation probability to that. We optimise this average value so as to maximise the probability of positivity. The value at which this probability is maximised should ideally be the required conditional expectation. 

In order to prove this in details, we invoke a model called the switching sign branching random walk, which is similar in structure to the original branching random walk. The model is motivated by a similar model that had been introduced before for BRW on the lattice $\mathbb{Z}^2$ in \cite{DG15}, which was effectively a construction on $4-$ary tree. We have done a more general construction of this on a $d-$ary tree in Section \ref{sec:ssbrwdef}. We begin our calculations with a preliminary upper bound on the left tail of the maximum of the BRW in Section \ref{sec:brwlefttail}. Section \ref{sec:ssbrwdef} contains the definition of the new model switching sign branching random walk followed by a comparison of positivity for the branching random walk with this model using Slepian's lemma. A left tail computation for the maximum of this model gives us the  ingredients for proof of Theorem \ref{thm-positivityBRW}, which is in the concluding part of Section \ref{sec:ssbrwtails}. Section \ref{sec:condexpproof} contains the proof of Theorem \ref{thm-typicalpos}. The upper bound follows from Section \ref{sec:ssbrwdef}, while for the lower bound we further have to invoke the Bayes' rule and tail estimates to arrive at our result.  

\textbf{Notation:} We denote the event $\{ \phi_v^n \ge 0 ~ \forall v \in T_n\}$ by {\bf $\Lambda^+_n$}. We also term the sum of all the Gaussian variables at the level $n$ as $S_n$. In mathematical terms $S_n = \sum_{v : v \in T_n} \phi_v^n,$ where the sum contains $d^n$ terms.

\begin{remark} The representation of the BRW as a sum of two Gaussian fields, in the setting of entropic repulsion, is a key point of the article. The constant part which represents the typical value of the field helps in obtaining the height under the entropic repulsion, while the covariance fluctuations remain restored in the other part. This representation helps in optimizing over the set of possible values for the typical height of the field under positivity. 
	
\end{remark}
\begin{remark}
	Future directions along the line of this work include firstly the distributional behaviour and convergence of the branching random walk under positivity. In \cite{Deuschel99} it has been shown that the infinite GFF for $d\ge 3$ under positivity, on removing the conditioned height, converges weakly to the lattice free field. Whether a similar phenomenon can be observed in case of BRW is something that can be considered.  
\end{remark}

\begin{remark}
	Furthering our work, we can also consider the similar phenomenon for general log-correlated Gaussian fields. The splitting of the covariance matrix into two parts, one involving a constant Gaussian field, is not immediate in case of log-correlated Gaussian fields as in the form considered in \cite{DRZ15}. 
\end{remark}
\section{Left tail of maximum of BRW}\label{sec:brwlefttail} 
This section is dedicated to proving an exponential upper bound on the left tail of the maximum of a BRW. 
We first begin with a comparison lemma by Slepian for Gaussian processes in \cite{slepian62}.
\begin{lemma}\label{lem-slepian}
	Let $\mathcal{A}$ be an arbitrary finite index set and let $\{ X_a : a \in \mathcal{A}\}$ and $\{ Y_a : a \in \mathcal{A}\}$ be two centered Gaussian processes such that: $\E(X_a - X_b)^2 \ge \E(Y_a - Y_b)^2$, for all $a,b \in \mathcal{A}$ and $\V(X_a) = \V (Y_a)$ for all $a \in \mathcal{A}$. Then $\P(\max_{a \in \mathcal{A}} X_a \ge \lambda) \ge \P(\max_{a \in \mathcal{A}} Y_a \ge \lambda)$ for all $\lambda \in \mathbb{R}$.
\end{lemma}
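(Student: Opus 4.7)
The plan is to use the Gaussian interpolation technique, which reduces the inequality to a sign check on a certain derivative. Rewriting the desired conclusion as $\P(X_a < \lambda ~ \forall a) \le \P(Y_a < \lambda ~ \forall a)$, I would enlarge the probability space to contain independent copies of $X = (X_a)_{a \in \mathcal{A}}$ and $Y = (Y_a)_{a \in \mathcal{A}}$, and define the interpolating Gaussian process
\[
Z_a(t) = \sqrt{1-t}\, X_a + \sqrt{t}\, Y_a , \qquad t \in [0,1].
\]
Because $\V(X_a) = \V(Y_a)$, one checks $\V(Z_a(t)) = \V(X_a)$ for every $t$, and $\Cov(Z_a(t), Z_b(t)) = (1-t)\Cov(X_a,X_b) + t\,\Cov(Y_a,Y_b)$ linearly interpolates the covariances.

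Next I would take a smooth, nonincreasing approximation $g_\varepsilon : \mathbb{R} \to [0,1]$ of $\mathbf{1}\{\cdot < \lambda\}$ with $g_\varepsilon' \le 0$, and set $f_\varepsilon(z) = \prod_{a \in \mathcal{A}} g_\varepsilon(z_a)$, so that $f_\varepsilon(Z(t)) \to \mathbf{1}\{Z_a(t) < \lambda ~\forall a\}$ as $\varepsilon \downarrow 0$. The core computation is to differentiate $F_\varepsilon(t) := \E f_\varepsilon(Z(t))$ in $t$ and show $F_\varepsilon'(t) \ge 0$. Writing $\dot Z_a(t) = -\tfrac{1}{2\sqrt{1-t}}X_a + \tfrac{1}{2\sqrt{t}} Y_a$ and applying Gaussian integration by parts (Stein's identity) to each term $\E\bigl[\dot Z_a(t)\, \partial_a f_\varepsilon(Z(t))\bigr]$, I would obtain
\[
F_\varepsilon'(t) = \tfrac{1}{2}\sum_{a,b \in \mathcal{A}} \bigl[\Cov(Y_a, Y_b) - \Cov(X_a, X_b)\bigr]\, \E\bigl[\partial_a \partial_b f_\varepsilon(Z(t))\bigr],
\]
using that $\Cov(\dot Z_a(t), Z_b(t)) = \tfrac{1}{2}[\Cov(Y_a,Y_b) - \Cov(X_a,X_b)]$.

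Now I would analyze the two factors. The diagonal terms $a=b$ vanish because $\V(X_a) = \V(Y_a)$. For $a \ne b$, the hypothesis $\E(X_a-X_b)^2 \ge \E(Y_a-Y_b)^2$ combined with the equal-variance condition yields $\Cov(X_a,X_b) \le \Cov(Y_a,Y_b)$, so the bracket is nonnegative. On the analytic side, the product form of $f_\varepsilon$ gives
\[
\partial_a \partial_b f_\varepsilon(z) = g_\varepsilon'(z_a)\, g_\varepsilon'(z_b)\prod_{c \ne a,b} g_\varepsilon(z_c) \ge 0
\]
for $a \ne b$, since $g_\varepsilon' \le 0$ makes the product of the two derivatives nonnegative and the remaining factors are nonnegative. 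Thus $F_\varepsilon'(t) \ge 0$, so $F_\varepsilon(0) \le F_\varepsilon(1)$, i.e.\ $\E f_\varepsilon(X) \le \E f_\varepsilon(Y)$.

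Finally I would send $\varepsilon \downarrow 0$ using bounded convergence; since $\P(\max_a X_a = \lambda) = \P(\max_a Y_a = \lambda) = 0$ for the continuous Gaussian law (with a trivial adjustment if some $\V(X_a) = 0$), the limit yields $\P(\max_a X_a < \lambda) \le \P(\max_a Y_a < \lambda)$, and taking complements proves the lemma. The main technical point is the Gaussian integration by parts identity, but since $\mathcal{A}$ is finite this is standard; the only modest care needed is in justifying differentiation under the expectation and controlling behavior near $t=0$ and $t=1$, which is handled by the smoothness of $f_\varepsilon$ and Gaussian tail bounds.
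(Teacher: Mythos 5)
Your interpolation argument is correct: the process $Z_a(t)=\sqrt{1-t}\,X_a+\sqrt{t}\,Y_a$ built from independent copies, the Gaussian integration-by-parts identity giving $F_\varepsilon'(t)=\tfrac12\sum_{a\neq b}\bigl[\Cov(Y_a,Y_b)-\Cov(X_a,X_b)\bigr]\,\E\bigl[\partial_a\partial_b f_\varepsilon(Z(t))\bigr]$ (the diagonal terms cancel by the equal-variance hypothesis), the observation that the increment condition plus equal variances forces $\Cov(X_a,X_b)\le\Cov(Y_a,Y_b)$, and the sign check $\partial_a\partial_b f_\varepsilon\ge 0$ for the product of nonincreasing factors all fit together exactly as needed, and the limit $\varepsilon\downarrow 0$ (with the caveat about possible atoms at $\lambda$, which can also be handled by proving the bound for all but countably many $\lambda$ and using left-continuity of $\lambda\mapsto\P(\max_a X_a<\lambda)$) delivers the stated comparison. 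Note, however, that the paper does not prove this lemma at all: it is quoted as Slepian's classical comparison inequality with a citation to Slepian's 1962 paper, so there is no in-paper argument to compare against. What you have written is the standard modern proof by Gaussian interpolation (the Kahane-style smoothing argument), which is self-contained and more general in spirit than the original resolvent/heat-equation approach of Slepian; the only points deserving explicit care, which you already flag, are differentiation under the expectation and the integrability of $\dot Z_a(t)$ near $t=0,1$, both routine for a finite index set and bounded smooth $f_\varepsilon$.
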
 

The main result of the section is the following: 
\begin{lemma}
	There exists constants $\bar{C} , c^* >0$ such that for all $n \in \mathbb{N}$ and $0 \le \lambda \le ( n)^{2/3}$, 
	\begin{equation}\label{eq-BRWlefttailub}
	\P( \max_{v \in T_n} \phi^n_v \leqslant m_n -\lambda ) \le \bar{C} e^{-c^* \lambda}
	\end{equation}
\end{lemma}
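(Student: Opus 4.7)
The approach is to exploit the self-similar tree structure of the BRW. For any $k \in \{1,\ldots,n\}$, let $\pi_k(v)$ denote the level-$k$ ancestor of $v$. The covariance structure (\ref{eq-brw-definition}) yields the mutually independent decomposition
\[
\phi_v^n \,=\, \phi_{\pi_k(v)}^k + \tilde\phi_v^{(n-k)}, \qquad v \in T_n,
\]
where $(\phi_u^k)_{u\in T_k}$ is itself a BRW on a depth-$k$ tree and, independently across $u \in T_k$ and of the top BRW, $(\tilde\phi_v^{(n-k)})_{\pi_k(v)=u}$ is a BRW of depth $n-k$ on the subtree rooted at $u$. Writing $M_{n-k}^{(u)} := \max_{\pi_k(v)=u}\tilde\phi_v^{(n-k)}$, these are i.i.d.\ copies of $M_{n-k}$ and
\[
M_n \,=\, \max_{u \in T_k}\bigl(\phi_u^k + M_{n-k}^{(u)}\bigr).
\]

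My plan is to choose $k = \lceil \alpha\lambda\rceil$ for an appropriately small $\alpha > 0$. Since $\lambda \le n^{2/3}$, $k = o(n)$ and $m_n = m_{n-k} + c_1 k + O(k/n)$, so all correction terms are absorbed into the constants. The failure event $\{M_n \le m_n - \lambda\}$ forces every term $\phi_u^k + M_{n-k}^{(u)}$ to miss the threshold $m_n - \lambda = m_{n-k} + c_1 k - \lambda + O(1)$. I split this by requiring $\phi_u^k \ge c_1 k - \lambda/2$ together with $M_{n-k}^{(u)} \ge m_{n-k} - \lambda/2 + O(1)$. The first step is to show that with probability $1 - O(e^{-c\lambda})$ there are at least $N := \lceil e^{c^\sharp \lambda}\rceil$ level-$k$ vertices satisfying the first inequality, via a second-moment estimate on the count $|A|:=\#\{u\in T_k:\phi_u^k\ge c_1 k-\lambda/2\}$, iterated across disjoint subtrees of the top-$k$ tree to boost a constant-order Paley--Zygmund bound into exponential concentration. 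Conditionally on the top-$k$ BRW, the $(M_{n-k}^{(u)})_{u\in T_k}$ are i.i.d.; each satisfies the second inequality with probability at least some $p_0 > 0$ (since $\E M_{n-k} = m_{n-k} + O(1)$ and $\lambda \ge 2$), so the conditional probability that all $N$ trials fail is at most $(1-p_0)^N$, doubly exponentially small in $\lambda$. Combining the two contributions yields the claimed bound $\bar C e^{-c^*\lambda}$.

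The main obstacle is the iterated second-moment step: a direct Paley--Zygmund bound on $|A|$ using only $\E|A|$ and $\E|A|^2$ gives only a constant lower bound on $\P(|A| \ge \E|A|/2)$, and one needs to bootstrap this to $1 - e^{-c\lambda}$. The strategy is to partition the top of the tree into $\Theta(\lambda)$ independent blocks, each itself a subtree rooted at a fixed early level, and run Paley--Zygmund inside each block (using Slepian's Lemma~\ref{lem-slepian} where needed to relate the per-block maximum to a tractable comparison process): then the failures in different blocks are independent, and their compound probability is exponentially small in the number of blocks. Fixing $\alpha$, the block size, and the threshold $\lambda/2$ in terms of $\lambda$ so that the per-block expected count exceeds a suitable constant is the technical heart of the argument, and this simultaneously determines admissible constants $\bar C, c^*$ uniformly in the range $0 \le \lambda \le n^{2/3}$; the upper cutoff $n^{2/3}$ enters precisely to keep the correction $O(k/n)$ in $m_n - m_{n-k}$ absorbable.
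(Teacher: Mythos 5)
Your proposal is correct in outline, but it takes a genuinely different route from the paper. Both arguments cut the tree at depth $k=\Theta(\lambda)$ and both rest on the same two external inputs --- tightness of $\max_{v}\phi^n_v-m_n$ (giving a constant-probability right-tail bound for each depth-$(n-k)$ subtree) and the expansion $m_n-m_{n-k}=c_1k+o(1)$ --- but they treat the top $k$ levels differently. The paper disposes of the top levels in one stroke: by Slepian's Lemma \ref{lem-slepian} it replaces the first $k$ increments by a single Gaussian $\phi$ of variance $\asymp\lambda$ shared by all leaves (this only raises covariances while keeping variances fixed), so the failure event is contained in the union of ``all $d^{k}$ independent subtree maxima miss the level $m_n-\lambda/2$'' (probability at most $2^{-d^{k}}$, by tightness) and ``$\phi\le -\lambda/2$'' (probability $e^{-c\lambda}$ by the Gaussian tail). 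You instead keep the top-$k$ BRW intact and prove, via a bootstrapped Paley--Zygmund counting argument, that with probability $1-O(e^{-c\lambda})$ there are many level-$k$ vertices $u$ with $\phi^k_u\ge c_1k-\lambda/2$, and then invoke conditional independence of the subtree maxima. This is workable (with a negative threshold the first and second moments of the count are comparable, so the per-block estimate is easy), but it buys nothing here and costs a multi-step counting analysis that the one-line Slepian comparison makes unnecessary; on the other hand, your scheme is more robust in settings where no convenient comparison field is available.

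Two points in your sketch need repair when written out. First, subtrees rooted at a fixed early level are not independent as fields: they share the ancestral values $\phi^{j_0}_w$ along the top of the tree. Define block success through the block-internal increments (these are genuinely independent across blocks) and control the $\Theta(\lambda)$ block-root values separately; each is Gaussian with variance $j_0=O(\log\lambda)$, so asking all of them to exceed $-\lambda/4$ fails with probability superexponentially small in $\lambda$, which is harmless. Second, the lower bound $p_0>0$ for $\P\bigl(M_{n-k}\ge m_{n-k}-\lambda/2+O(1)\bigr)$ does not follow from $\E M_{n-k}=m_{n-k}+O(1)$ alone; you need tightness of the centered maximum (as the paper uses from \cite{ofernotes}) or a right-tail lower bound. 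Finally, requiring $N=e^{c^{\sharp}\lambda}$ good vertices is more than necessary: $N=\Theta(\lambda)$ already makes $(1-p_0)^{N}\le e^{-c\lambda}$, which slightly simplifies the counting step.
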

\begin{proof}
	From \cite[Section 2.5]{ofernotes} we have tightness for $\{ \max_{v \in T_n} \phi^n_v - m_n \}_{n \in \mathbb{N}} $, where $m_n=\sqrt{2 \log d}n-\frac{3}{2\sqrt{2 \log d}}\log n$. So there exists $ \beta > 0$ such that for all $n\ge 2$, 
	\begin{equation}\label{eq-BRWsubtreeub} 
	\P (\max_{v \in T_n} \phi^n_v \geqslant m_n - \beta ) \geqslant 1/2. 
	\end{equation} 
	Further, we also have that for some $\kappa > 0$  and for all $ n \geq n' \geq 2$ 
	\begin{equation}\label{eq-diffexpmeanbrw}
	\sqrt{2 \log d}(n-n') - \frac{3}{2 \sqrt{2 \log d}} \log( n/n') - \kappa \leqslant m_n - m_{n'} \leqslant \sqrt{2 \log d}(n-n') + \kappa. 
	\end{equation}
	Now we fix $\lambda' = \lambda/2$, and $n'=\lceil n - \frac{1}{\sqrt{2 \log d}} (\lambda' - \beta - \kappa)\rceil$, where $\lambda'$ satisfies $\lambda' \geqslant \beta + \kappa+\sqrt{2 \log d}$. From (\ref{eq-diffexpmeanbrw}) it follows then that $m_n - m_{n'} \leqslant \lambda' - \beta$. We consider a tree of height $n$ rooted at $0$. We consider all subtrees rooted at vertices $v \in T^n$ such that $d_T(0,v)=n-n'$. They are individually trees of height $n'$. 
	The total number of such subtrees we have is $d^{n-n'}$. We call their leaf nodes $\{ T_{n'}^{(1)}, T_{n'}^{(2)}, \ldots, T_{n'}^{(d^{n-n'})} \}$. Now for all $v \in T_n$, we define 
	$$\bar{\phi}^n_v = g_v^{n'}+ \phi, $$ where $(g_v^{n'})_{v \in T_n}$ are the BRWs obtained by adding the Gaussians for the edges only in the subtrees of height $n' $, and  $\phi$ is an independent Gaussian of mean $0$ and variance $n-n'$. Clearly 
	$$\V \phi_v^n = \V \bar{\phi}_v^n \quad \text{and } \quad \E \phi_v^n \phi_u^n \leq \E \bar{\phi}_v^n \bar{\phi}_u^n ~ \forall u \neq v \in T_n.$$
	So by Lemma \ref{lem-slepian}, we have 
	\begin{equation}\label{eq-brwtailcomp}
	\P(\max_{v \in T_n} \phi_v^n \leq t) \le \P(\max_{v \in T_n} \bar{\phi}_v^n \leq t) ~ \forall ~ t \in \mathbb{R}. 
	\end{equation}
	Using (\ref{eq-BRWsubtreeub}) and (\ref{eq-diffexpmeanbrw}), one has for all $i \in \{1, 2, \ldots, d^{n-n'} \}$, 
	\begin{eqnarray*}
		\P(\sup_{v \in T_{n'}^{(i)}} g_v^{n'} \geq m_n - \lambda' ) &=& \P(\sup_{v \in T_{n'}^{(i)}} g_v^{n'} \geq m_{n'}+ m_n - m_{n'}  - \lambda' ) \\
		&\ge& \P(\sup_{v \in T_{n'}^{(i)}} g_v^{n'} \geq m_{n'} - \beta )  \ge 1/2
	\end{eqnarray*} 
	and so $\P(\sup_{v \in T_n} g_v^{n'} < m_n - \lambda' ) \leq (\frac{1}{2})^{d^{n-n'}}$. 
	
	Therefore, 
	$$\P(\sup_{v \in T_n} \bar{\phi}_v^{n} \leq m_n - \lambda ) \leq \P(\sup_{v \in T_n} g_v^{n'} < m_n - \lambda' ) + \P(\phi \leq -\lambda' ) \leq  \bar{C} e^{-c^* \lambda}, $$ 
	for some $\bar{C} , c^* > 0$. Now in conjunction with (\ref{eq-brwtailcomp}), the lemma is proved. Note that we choose $\bar{C} , c^* > 0$ such that the inequality holds for $\lambda \geqslant 2(\beta +\kappa+\sqrt{2 \log d})$ as well as $\lambda < 2(\beta +\kappa+\sqrt{2 \log d})$.  
\end{proof}
\section{Switching Sign Branching Random Walk}\label{sec:ssbrwdef}
At this juncture we define a new Gaussian process on the tree, which we call the switching sign branching random walk. This was used to approximate the branching random walk in \cite{DG15} in case of a $4$-ary tree. We have generalised the process for a $d$-ary tree. The switching sign branching random walk consists of two parts, one that varies across vertices, and the other that is fixed over vertices. The first part of the process, which is not fixed over vertices, is different from the normal branching random walk in the sense that instead of the $d$ edges coming out of it being associated to independent normal random variables, they are associated with linear combinations of $d-1$ independent Gaussians, such that the covariance between any two of them is the same, and all of them add up to zero. The existence of this is guaranteed by the following Lemma. 
\begin{lemma}\label{lem-nodessbrw}
	There exists $A \in \mathbb{R}^{(d-1)\times(d-1)}$ such that for $W \sim N(0, \sigma^2 I_{(d-1) \times (d-1)})$, the covariance matrix of $\tilde{Y}=(Y_1, Y_2, \cdots, Y_{d-1})^{\top}=AW$ has diagonal entries equal to $\sigma^2$ and all its off-diagonal entries equal (say $\eta$). Further $\V(\1^{\top} AW)=\sigma^2$ and $\Cov(-\1^{\top}AW, (AW)_i)=\eta$ for all $i \in \{1, 2, \ldots, d-1\}$. Here by $\1$ we represent the column vector of size $d-1$ with all its entries as $1$, and $\1^{\top}$ is the transpose of it. 
\end{lemma}
\begin{proof}
	We know that the covariance matrix for $\tilde{Y}$ is $\sigma^2 AA^{\top}$. Further from the condition that $\V(\1^{\top} AW)=\sigma^2$ we get that $\eta=-\frac{\sigma^2}{d-1}$. So in order for $A$ to exist we must have 
	\[
	AA^{\top}=  \begin{bmatrix}
	1       & -\frac{1}{d-1}  & -\frac{1}{d-1} & \dots & -\frac{1}{d-1} \\
	-\frac{1}{d-1}       & 1 & -\frac{1}{d-1} & \dots & -\frac{1}{d-1} \\
	\vdots & \vdots & \vdots & \ddots & \vdots \\
	-\frac{1}{d-1}      & -\frac{1}{d-1} & -\frac{1}{d-1} & \dots & 1
	\end{bmatrix}_{(d-1) \times (d-1)}.
	\]
	Since the matrix on the right hand side is a symmetric matrix with non-negative eigenvalues, by Cholesky decomposition we obtain the existence of such an $A$. In particular, we have a specific choice of $A=\sqrt{\frac{d}{d-1}}I_{(d-1)\times(d-1)}-\frac{\sqrt{d}-1}{(d-1)^{3/2}}J_{(d-1)\times(d-1)}$, where $J_{k\times k}$ is the square matrix of order $k$ with all entries equal to $1$.
\end{proof}
We now define $Y_d=-\1^{\top}\tilde{Y}$, and  $Y=(Y_1, Y_2, \cdots, Y_{d})^{\top}$. This $Y$ is a degenerate multivariate normal, with variance-covariance matrix given as follows: 
\begin{equation}\label{eq-ssbrw-definition}
\begin{split}
\V (Y_i) &= \sigma^2 \quad \text{for all } i \in \{1,2,\ldots, d\} \\
\Cov(Y_i, Y_j) &=  - \frac{\sigma^2}{d-1} \quad \text{ for all }i \neq j \in \{1,2,\ldots, d\} \,.
\end{split}
\end{equation}
A pictorial representation of a node for the SSBRW process is given in Figure \ref{fig-ssbrw}. 
\begin{figure}[ht]
	\begin{center}
		\tikz[scale=1.25]{
			\draw[black, thick] (5,4) -- (1,2);
			\draw[black, thick] (5,4) -- (3,2);
			\draw[black, dashed] (5,4) -- (4.8,2);
			\draw[black, dashed] (5,4) -- (5.2,2);
			\draw[black, thick] (5,4) -- (7,2);
			\draw[black, thick] (5,4) -- (9,2);
			\draw[black, thick, fill=white] (5,4) circle[radius=.25cm];	
			\draw[black, thick, fill=white] (1,2) circle[radius=.35cm];
			\draw[black, thick, fill=white] (3,2) circle[radius=.35cm];	
			\draw[black, thick, fill=white] (7,2) circle[radius=.35cm];				\draw[black, thick, fill=white] (9,2) circle[radius=.35cm];	
			
			\node[ann] at (5,4) {$Z$};
			\node[ann,scale=.7] at (1,2) {$Z+Y_1$};
			\node[ann,scale=.7] at (3,2) {$Z+Y_2$};
			\node[ann,scale=.7] at (7,2) {$Z+Y_{d-1}$};
			\node[ann,scale=.7] at (9,2) {$Z+Y_d$};
			\node[ann,scale=.7] at (3,3 ) {$Y_1=(AW)_1$};
			\node[ann,scale=.7] at (4.2, 3) {$Y_2=(AW)_2$};				
			\node[ann,scale=0.7] at (6, 3) {$Y_{d-1}=(AW)_{d-1}$};
			\node[ann,scale=0.7] at (7.8, 3) {$Y_d= - \sum_{i=1}^{d-1} (AW)_i$};	
			
		}
		\caption{Node of the varying part of SSBRW}
		\label{fig-ssbrw}
	\end{center}
\end{figure}

We now provide a few heuristic descriptions of the SSBRW, followed by formal defintion. 	

We consider a particle starting from a random point $X \in \mathbb{R}$, which dies at time $1$, and splits into $d$ number of children. The joint distribution of the distances travelled by these children is given by (\ref{eq-ssbrw-definition}), with choice of $\sigma^2=1-d^{-n}$.  Then at time $2$, each of these children die, and give rise to $d$ number of children each, which in turn follow the same process, the displacements at each step being independent of the displacements in the previous time points, except for the variances and covariances at level $l$ being $1-d^{-(n-l+1)}$ and $-\frac{1-d^{-(n-l+1)}}{d-1}$ each respectively. At time $n$ we have $d^n$ many particles, each having a displacement. The displacements of the collection of these particles is called a switching sign branching random walk at time $n$. The random term $X$ is normally distributed, with mean $0$ and variance $\frac{1- d^{-n}}{d-1}$, and is independent of everything else.

We can equivalently define a  switching sign branching random walk as a Gaussian process on $T^n$. The root is fixed at a random point $X$, and the displacements at each step are attached to the edges. So, the first $d$ displacements are attached to the edges between the root and its $d$ children. To each vertex we attach a quantity equal to sum of the Gaussians that we encounter while looking at the shortest path between itself and the root, plus the random variable attached to the root. Then, the collection of the displacements of the $d^n$ particles at time $n$, is given by the Gaussians attached to all the vertices in $T_n$. We denote it by $\{ \xi_v^n : v \in T_n \}$. This is the switching sign branching random walk at time $n$. Two particles at time $n$ having risen from the same ancestor at time $k$ $(k \leq n)$, is equivalent to two leaf nodes, which have branched out from the same vertex at level $k$ of the tree. Each of these displacements are sums of $n$ independent Gaussians, plus the random term $X$ which is independent of everything else.

The formal definition of the SSBRW is now provided below:
\begin{defn}
	The collection $\{ Y_{i,j}: j=1,2,\ldots, d^i, i =1,2,\ldots, n \}$ represents displacements, and are Gaussian random variables. But unlike the BRW, we have $\V(Y_{i,j})=1-d^{-(n-i+1)}$. Also, the collection of $\{Y_{i,j} : j=1,2,\ldots, d^i, i =1,2,\ldots, n \}$ does not represent a collection of independent random variables. Rather we have $\sum_{j'=1}^d Y_{i,md+j'}=0$, for all $m=\{0,1,\ldots, d^{i-1}-1\},$ $ i=\{1,2,\ldots, n\}$. We do however still have that $Y_{i_1,j_1}$ and $Y_{i_2, j_2}$ are independent if $i_1\neq i_2$. Also, $Y_{i,j_1}$ and $Y_{i, j_2}$ are independent if $\lceil j_1/d \rceil \neq \lceil j_2/d \rceil$. Otherwise, if $\lceil j_1/d \rceil =\lceil j_2/d \rceil$ then $Y_{i,j_1}$ and $Y_{i, j_2}$ are not independent, and $\Cov(Y_{i,j_1}, Y_{i,j_2})=-\frac{1-d^{-(n-i+1)}}{d-1}$. There are $d^n$ many leaf nodes in the tree, and we can fix an ordering of the vertices from $1$ to $d^n$. For any $v \in T_n$, i.e. $v \in \{1,2,\ldots, d^n \}$, we define $a_i(v)=\lceil \frac{v}{d^{n-i}}\rceil$ for $i=1,2,\ldots, n$. Then we can define $\tilde{\phi}_v^n=\sum_{j=1}^n Y_{j,a_j(v)}$. The switching sign branching random walk is given by
	\begin{equation}\label{eq-ssbrwdefn}
	\xi^n_v = \tilde{\phi}_v^n + X
	\end{equation}
	where $X$ is an independent Gaussian variable with mean zero and variance $\frac{1- d^{-n}}{d-1}$.
\end{defn}

Figure \ref{SSBRWbinary} gives a pictorial representation of the switching sign branching random walk for the case $d=2$, i.e. on a binary tree. All the $Y_{i,j}$'s represent displacements, and are distributed as Gaussian. In this figure we show the dependence described in the paragraph above by replacing one of the dependent random variables by the relevant function of the others which it correlates with. 

In this construction, unlike the BRW, we have a different variance for each level $l$ ($1\le l \le n$). Here, level $1$ denotes the edge connecting the root to its children and level $n$ denotes the edges joining the leaf nodes to their parents. 
We denote this switching sign branching random walk on the leaf nodes $T_n$ as $\{ \xi_v^n : v \in T_n \}$. For $v \in T_n$ we denote the Gaussian variable that is added on level $l$, on the path connecting $v$ to the root, by $Y_{l,a_l(v)}$ as defined before. We have $\V( Y_{l,a_l(v)})= 1 - d^{-(n-l+1)}$ as stated in the formal definition. The switching sign branching random walk will consist of two parts, the first coming from the contribution at different levels in the tree, which is $\tilde{\phi}_v^n \overset{def}{=} \sum_{l=1}^n Y_{l,a_l(v)}$. The second part is the random variable $X$, which is an independent Gaussian variable with mean zero and variance $\frac{1- d^{-n}}{d-1}$. The two parts are summed together to get $\xi_v^n$.

\begin{figure}[h!]\label{SSBRWbinary}
	\begin{center}
		\tikz[scale=.8]{
			\draw[black, thick] (8,8) -- (6,6);
			\draw[black, thick] (6,6) -- (4.5,4);
			\draw[black, thick] (6,6) -- (7.5,4);
			\draw[black, dashed] (4.5,4) -- (3.5,2);			
			\draw[black, dashed] (4.5,4) -- (5.5,2);
			\draw[black, thick] (3.5,2) -- (4.5,0);
			\draw[black, thick] (3.5,2) -- (2.5,0);
			\draw[black, dashed] (6.5,4) -- (6.5,0);			
			\draw[black, dashed] (8,4) -- (8,0);
			\draw[black, dashed] (9.5,4) -- (9.5,0);
			\draw[black, dashed] (11.5,4) -- (12.5,2);			
			\draw[black, dashed] (11.5,4) -- (10.5,2);
			\draw[black, thick] (10,6) -- (8.5,4);
			\draw[black, thick] (10,6) -- (11.5,4);
			\draw[black, thick] (8,8) -- (10,6);
			\draw[black, thick] (12.5,2) -- (11.5,0);
			\draw[black, thick] (12.5,2) -- (13.5,0);
			\draw[black, thick, fill=white] (8,8) circle[radius=.45cm];				
			\draw[black, thick, fill=white] (6,6) circle[radius=.45cm];			
			\draw[black, thick, fill=white] (10,6) circle[radius=.45cm];
			\draw[black, thick, fill=white] (4.5,4) circle[radius=.45cm];
			\draw[black, thick, fill=white] (4.5,0) circle[radius=.45cm];				
			\draw[black, thick, fill=white] (2.5,0) circle[radius=.45cm];	
			\draw[black, thick, fill=white] (13.5,0) circle[radius=.45cm];				
			\draw[black, thick, fill=white] (11.5,0) circle[radius=.45cm];
			\node[ann] at (8,8 ) {$X$};	
			\node[ann] at (7,7 ) {$Y_{1,1}$};
			\node[ann, scale=.6] at (6,6 ) {$X+Y_{1,1}$};
			\node[ann,scale=.4] at  (4.5,4)  {$X+ Y_{1,1}+Y_{2,1}$};
			\node[ann, scale=.6] at (10,6 ) {$X-Y_{1,1}$};
			\node[ann] at (5,5) {$Y_{2,1}$};
			\node[ann] at (7,5) {$-Y_{2,1}$};
			\node[ann] at (9, 5) {$Y_{2,3}$};	
			\node[ann] at (11, 5) {$-Y_{2,3}$};	
			\node[ann] at (3, 1) {$Y_{n,1}$};	
			\node[ann] at (4.3, 1) {$-Y_{n,1}$};
			\node[ann,scale=.7] at (13.3, 1) {$-Y_{n,2^n-1}$};	
			\node[ann,scale=.7] at (12, 1) {$Y_{n,2^n-1}$};
			\node[ann] at (2.5, -.65) {$1$};
			\node[ann] at (4.5, -.65) {$2$};
			\node[ann] at (11.5, -.65) {$2^n-1$};
			\node[ann] at (13.5, -.65) {$2^n$};
			\node[ann, scale=.7] at (2.5, 0) {$X+\tilde{\phi}^n_{1}$};	
			\node[ann, scale=.7] at (4.5, 0) {$X+\tilde{\phi}^n_{2}$};
			\node[ann, scale=.7] at (11.5, 0) {$X+\tilde{\phi}^n_{2^n-1}$};
			\node[ann, scale=.7] at (13.5, 0) {$X+\tilde{\phi}^n_{2^n}$};			
			\node[ann,scale=.65] at (0.7, 0) {$\tilde{\phi}^n_{1}=X+\sum_{j=1}^{n}Y_{j,1}$};			
			\node[ann] at (9, 7) {$-Y_{1,1}$};		
		}
	\end{center}
	\caption{SSBRW on Binary tree }
\end{figure}

The covariance structure for this new model closely resembles that of the branching random walk. The following lemma deals with this comparison: 
\begin{lemma}\label{lem-compbrwssbrw}
	The Gaussian fields $\{ \xi^n_v: v \in T_n \}$ and $\{ \phi_v^n : v \in T_n \}$ are identically distributed. 
\end{lemma}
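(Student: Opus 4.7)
Both $\{\phi^n_v\}$ and $\{\xi^n_v\}$ are centred Gaussian processes, so it suffices to show their covariance matrices agree, i.e.\ that
\[
\V(\xi^n_v)=n, \qquad \Cov(\xi^n_u,\xi^n_v)=n-\tfrac12 d_T(u,v) \quad \text{for } u\neq v.
\]
The plan is a direct computation, exploiting the independence built into the construction: (i) $X$ is independent of $\tilde\phi^n$; (ii) the level-$l$ contributions $\phi^{n,l}_\cdot$ and $\phi^{n,l'}_\cdot$ are independent for $l\neq l'$; (iii) within a single level, two disjoint sibling groups (coming from distinct ancestors at that level) are independent.

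For the variance I will write $\V(\xi^n_v)=\V(X)+\sum_{l=1}^n\V(\phi^{n,l}_v)$ and evaluate the geometric sum
\[
\sum_{l=1}^n\bigl(1-d^{-(n-l+1)}\bigr)=n-\sum_{j=1}^n d^{-j}=n-\frac{1-d^{-n}}{d-1},
\]
so that adding $\V(X)=\frac{1-d^{-n}}{d-1}$ gives $n$, as required. For the covariance, fix leaves $u\neq v$ and let $k\ge 1$ be the height of their most recent common ancestor, so that $d_T(u,v)=2k$. Along the path from the root, the variables $\phi^{n,l}_u$ and $\phi^{n,l}_v$ coincide for $l=1,\dots,n-k$ (they are attached to a shared edge); at level $l=n-k+1$ the edges to $u$ and $v$ are two distinct children of the same node in the construction, so the sibling covariance from the lemma defining $A$ gives $\Cov(\phi^{n,n-k+1}_u,\phi^{n,n-k+1}_v)=-\sigma_l^2/(d-1)=-\frac{1-d^{-k}}{d-1}$; finally for $l\ge n-k+2$ the two level-$l$ contributions lie in disjoint sibling groups and are therefore independent. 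Summing, together with $\V(X)$, yields
\[
\Cov(\xi^n_u,\xi^n_v)=(n-k)-\frac{d^{-k}-d^{-n}}{d-1}-\frac{1-d^{-k}}{d-1}+\frac{1-d^{-n}}{d-1}=n-k,
\]
where the intermediate geometric sum $\sum_{j=k+1}^n d^{-j}=\frac{d^{-k}-d^{-n}}{d-1}$ comes from the shared-edge variances. Since $n-k=n-\tfrac12 d_T(u,v)$, this matches \eqref{eq-brw-definition}.

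The proof is really just careful bookkeeping; the only conceptual point is verifying item (iii), namely that if the distinct ancestors of $u$ and $v$ at level $l\ge n-k+2$ are different nodes, then the $A$-transformed Gaussians attached to their outgoing edges are built from disjoint copies of the independent vector $X$ and are hence independent. This is the step where one must be explicit that the ``switching sign'' construction is applied independently at each internal node of $T_n$; once that is stated the cancellation of the three $1/(d-1)$ terms above drops out and the identification of the two laws is complete.
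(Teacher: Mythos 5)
Your proposal is correct and follows essentially the same route as the paper: verify that the centred Gaussian field $\xi^n$ has variance $n$ and covariance $n-k$ for leaves whose most recent common ancestor sits at height $k$, by summing the shared-edge variances, the sibling covariance $-\frac{1-d^{-k}}{d-1}$ at the splitting level, and $\V(X)=\frac{1-d^{-n}}{d-1}$. Your write-up is somewhat more explicit about the independence structure across levels and nodes, but the decomposition and the cancellation of the geometric sums are the same as in the paper's proof.
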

\begin{proof}
	First we show that the variances are identical for the two processes. To this end, we begin by computing the variance of $\xi^n_v$ as follows: 
	\begin{eqnarray*}
		\V(\xi^n_v) &=& 1 - d^{-1}+ 1 - d^{-2}+ \cdots + 1 - d^{-n} + \frac{1- d^{-n}}{d-1}\\
		&=& n - \frac{1- d^{-n}}{d-1}+ \frac{1- d^{-n}}{d-1}= n.
	\end{eqnarray*}
	Next, in case of the covariances we consider $u, v \in T_n, u \neq v$, such that they have the last common ancestor at generation $n-k$, i.e.\ $\Cov(\phi^n_u, \phi^n_v)=n-k$. Then we have 
		\begin{equation}\label{eq:ssbrwcov}
		\begin{array}{l}
		a_i (u) = a_i (v) ~ \text{for } 1\leq i \leq n- k, \\
		a_i (u) \neq a_i (v) ~ \text{for } n-k+1\leq i \leq n, \\
		\lceil a_{n-k+1}(u)/d \rceil = \lceil a_{n-k+1}(v)/d \rceil, \\
		\lceil a_{j}(u)/d \rceil \neq \lceil a_{j}(v)/d \rceil ~ \text{for } n-k+2\leq i \leq n.
		\end{array}
		\end{equation}
		We know that $\Cov( \tilde{\phi}_u^n, \tilde{\phi}_v^n)= \sum_{i=1}^n \Cov(Y_{i, a_i(u)}, Y_{i, a_i(v)})$ from the fact that $Y_{i_1,j_1}$ and $Y_{i_2, j_2}$ are independent if $i_1\neq i_2$. 
		From (\ref{eq:ssbrwcov}) we have, 
		\begin{equation*}
		\Cov( \tilde{\phi}_u^n, \tilde{\phi}_v^n)= \sum_{i=1}^{n-k} \V(Y_{i, a_i(u)})+ \Cov(Y_{n-k+1, a_{n-k+1}(u)}, Y_{n-k+1, a_{n-k+1}(v)}).
		\end{equation*}
		Plugging in the values we get, 
		$$\Cov( \tilde{\phi}_u^n, \tilde{\phi}_v^n)= - \frac{1- d^{-k}}{d-1} + \sum_{l=k+1}^{n}(1- d^{-l})= n - k - \frac{1- d^{-n}}{d-1}.$$
		Hence, $\Cov(\xi^n_u, \xi^n_v)=\V(X) + \Cov( \tilde{\phi}_u^n, \tilde{\phi}_v^n) = n-k $. 
	So, the covariance structures for the fields $\xi$ and $\phi$ match, and hence they are identically distributed. 
\end{proof}
A simple corollary of Lemma \ref{lem-compbrwssbrw}, is the following, based on the fact that the two processes have identical distributions. 
\begin{cor}
	We have the following equality: 
	\begin{equation}\label{eq-poscomp}
	\P( \phi_v^n \geq 0~ \forall v \in T_n ) = \P( \max_{v \in T_{n}} \tilde{\phi}_v^{n} \le X ) 
	\end{equation} 
\end{cor}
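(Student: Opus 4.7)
The plan is to chain together three short observations. By Lemma \ref{lem-compbrwssbrw} the fields $\{\phi_v^n : v \in T_n\}$ and $\{\xi_v^n : v \in T_n\}$ have the same joint distribution, so in particular
$$\P(\phi_v^n \ge 0 \ \forall v \in T_n) = \P(\xi_v^n \ge 0 \ \forall v \in T_n).$$
This immediately reduces the problem to analyzing the positivity event for the SSBRW.

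Next I would use the defining decomposition $\xi_v^n = \tilde{\phi}_v^n + X$ from \eqref{eq-ssbrwdefn}, together with the fact that $X$ is independent of $\{\tilde{\phi}_v^n\}$ (this is implicit in the variance computation in the proof of Lemma \ref{lem-compbrwssbrw}, which would otherwise not yield $\V(\xi_v^n)=n$). Rewriting the event vertex-by-vertex,
$$\{\xi_v^n \ge 0 \ \forall v \in T_n\} = \{X \ge -\tilde{\phi}_v^n \ \forall v \in T_n\} = \Bigl\{X \ge \max_{v \in T_n}(-\tilde{\phi}_v^n)\Bigr\}.$$

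Finally, I would invoke symmetry: because $\{\tilde{\phi}_v^n\}$ is a centered Gaussian field, $\{-\tilde{\phi}_v^n\}$ has the same joint law as $\{\tilde{\phi}_v^n\}$, so $\max_v(-\tilde{\phi}_v^n)$ and $\max_v \tilde{\phi}_v^n$ are equal in distribution. Combining this with the independence of $X$ from $\tilde{\phi}$ gives
$$\P\Bigl(X \ge \max_{v \in T_n}(-\tilde{\phi}_v^n)\Bigr) = \P\Bigl(\max_{v \in T_n} \tilde{\phi}_v^n \le X\Bigr),$$
which is exactly \eqref{eq-poscomp}.

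There is no substantive obstacle here; this corollary is really just a bookkeeping statement recasting positivity of the BRW as a left-tail event for the maximum of the varying part $\tilde{\phi}$ relative to the independent constant shift $X$. The only mild point worth flagging in the write-up is the symmetry step, which requires noting that $\tilde{\phi}$ is centered Gaussian so the sign flip preserves its law. The usefulness of this reformulation is that in subsequent sections one can separately estimate left tails of $\max_v \tilde{\phi}_v^n$ and the tail of the one-dimensional Gaussian $X$, which is precisely what drives the upper and lower bounds in Theorem \ref{thm-positivityBRW}.
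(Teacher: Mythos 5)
Your proposal is correct and matches what the paper intends: the paper gives no explicit proof, simply calling the identity a "simple corollary" of Lemma \ref{lem-compbrwssbrw}, and your argument is exactly the routine elaboration of that — equality in law of $\phi$ and $\xi$, the decomposition $\xi_v^n=\tilde{\phi}_v^n+X$, and the sign-flip symmetry of the centered field $\tilde{\phi}$ together with its independence from $X$ (which, as you note, is implicit in the variance bookkeeping of Lemma \ref{lem-compbrwssbrw}). No gaps; nothing further is needed.
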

\begin{cor}
	From \cite[Theorem 4]{ofernotes}, we have $\E \max_{v \in T_n} \phi_v^n = n \sqrt{2\log d } - \frac{3}{2 \sqrt{2 \log d}}\log n + O(1)$. Therefore, 
	\begin{equation*}
	\E \max_{v \in T_n } \tilde{\phi}^n_v = n \sqrt{2 \log d } - \frac{3 \log n}{2 \sqrt{2 \log d}} + O(1).
	\end{equation*}
\end{cor}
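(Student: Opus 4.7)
The plan is to read the formula directly off the decomposition (\ref{eq-ssbrwdefn}), together with Lemma \ref{lem-compbrwssbrw} and the cited expected maximum of the BRW. The key observation is that $X$ in the definition of $\xi_v^n$ does not depend on the vertex $v \in T_n$, so it factors out of the maximum:
$$\max_{v \in T_n} \xi_v^n \;=\; \max_{v \in T_n}\bigl( \tilde{\phi}_v^n + X \bigr) \;=\; \max_{v \in T_n} \tilde{\phi}_v^n \;+\; X.$$
The first step is therefore just to write this identity down and take expectations on both sides, noting that $\E X = 0$ by construction.

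Next, I would use Lemma \ref{lem-compbrwssbrw}, which asserts that the fields $\{\xi_v^n\}$ and $\{\phi_v^n\}$ have the same joint distribution, to replace $\E \max_v \xi_v^n$ by $\E \max_v \phi_v^n$. Combining the two steps gives the clean identity
$$\E \max_{v \in T_n} \tilde{\phi}_v^n \;=\; \E \max_{v \in T_n} \phi_v^n.$$
Finally, plugging in the cited asymptotic from \cite[Theorem 4]{ofernotes}, namely $\E \max_{v \in T_n} \phi_v^n = n\sqrt{2\log d} - \tfrac{3}{2\sqrt{2\log d}} \log n + O(1)$, yields the claim.

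There is really no obstacle here; the only thing to verify carefully is that the Gaussian $X$ in (\ref{eq-ssbrwdefn}) is independent of the collection $\{\tilde{\phi}_v^n : v \in T_n\}$, which is implicit in the construction (the components $\phi_v^{n,l}$ at the various levels and the extra variable $X$ are built from independent Gaussians). Given that, pulling $X$ outside the maximum and using $\E X = 0$ are standard. Since no $O(1)$ error is lost beyond the one already present in the BRW asymptotic, the stated rate is tight.
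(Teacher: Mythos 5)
Your proof is correct and is precisely the argument the paper leaves implicit for this corollary: since $X$ does not depend on $v$, one has $\max_{v \in T_n}\xi_v^n = \max_{v \in T_n}\tilde{\phi}_v^n + X$ pointwise, and Lemma \ref{lem-compbrwssbrw} together with $\E X = 0$ gives $\E\max_{v \in T_n}\tilde{\phi}_v^n = \E\max_{v \in T_n}\phi_v^n$, so the cited asymptotic transfers with no additional error. The only minor remark is that independence of $X$ from $\{\tilde{\phi}_v^n\}$ (which does hold by construction, and is needed elsewhere, e.g.\ in Lemma \ref{lem-compbrwssbrw}) is not actually required for this step, since linearity of expectation suffices once $X$ is pulled outside the maximum.
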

\begin{cor}
	There exists constants $\bar{C}' , c^* >0$ such that for all $n \in \mathbb{N}$ and $0 \le \lambda \le ( n)^{2/3}$, 
	\begin{equation}\label{eq-ssBRWlefttailub}
	\P( \max_{v \in T_n} \tilde{\phi}^n_v \leqslant m_n -\lambda ) \le \bar{C}' e^{-c^* \lambda}
	\end{equation}
\end{cor}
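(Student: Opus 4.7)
The plan is to reduce to the BRW left-tail bound \eqref{eq-BRWlefttailub} via the decomposition $\xi^n_v = \tilde{\phi}^n_v + X$ in \eqref{eq-ssbrwdefn} together with the equality in distribution $\xi^n \stackrel{d}{=} \phi^n$ from Lemma \ref{lem-compbrwssbrw}. Since $X$ does not depend on $v$, we have $\max_{v \in T_n} \tilde{\phi}^n_v = \max_{v \in T_n} \xi^n_v - X$, so the target event $\{\max_v \tilde{\phi}^n_v \le m_n - \lambda\}$ coincides with $\{\max_v \xi^n_v \le m_n - \lambda + X\}$. Splitting according to whether $X \le \lambda/2$ or $X > \lambda/2$, this event is contained in
\[
\bigl\{ \max_v \xi^n_v \le m_n - \lambda/2 \bigr\} \cup \bigl\{ X > \lambda/2 \bigr\},
\]
since on the first piece $m_n - \lambda + X \le m_n - \lambda/2$.

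A union bound then gives two contributions to estimate. By Lemma \ref{lem-compbrwssbrw}, the first has probability $\P(\max_v \phi^n_v \le m_n - \lambda/2)$, and since $\lambda/2 \le n^{2/3}$ under the hypothesis $\lambda \le n^{2/3}$, the BRW estimate \eqref{eq-BRWlefttailub} bounds it by $\bar{C} e^{-c^* \lambda/2}$. For the second, $X$ is centered Gaussian with variance $\sigma^2_{d,n} = (1 - d^{-n})/(d-1) \le 1/(d-1)$, uniformly in $n$, so the standard Gaussian tail yields $\P(X > \lambda/2) \le \exp(-(d-1)\lambda^2/8)$. Because the map $\lambda \mapsto -\alpha \lambda^2 + \beta \lambda$ is bounded above on $\lambda \ge 0$ for any fixed $\alpha, \beta > 0$, this quantity is at most a constant multiple of $e^{-c' \lambda}$ for any chosen rate $c' > 0$. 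Taking the new exponential rate to be the smaller of $c^*/2$ and $c'$, and absorbing prefactors into $\bar{C}'$, produces \eqref{eq-ssBRWlefttailub}.

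There is no real difficulty here; the argument simply decouples the $v$-independent shift $X$ from the branching structure, estimates its tail separately using its bounded variance, and chains the two preceding results. The only point worth noting is that $\V(X)$ is bounded uniformly in $n$, which is what makes the Gaussian contribution strictly sub-exponential in $\lambda$ and therefore absorbable into the leading exponential rate inherited from \eqref{eq-BRWlefttailub}.
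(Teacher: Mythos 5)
Your proof is correct, and it relies on the same two ingredients as the paper (Lemma \ref{lem-compbrwssbrw} giving $\xi^n \stackrel{d}{=} \phi^n$, and the BRW left-tail bound \eqref{eq-BRWlefttailub}), but the mechanics differ slightly. The paper argues in one line in the opposite direction: since $X$ is independent of $\tilde{\phi}^n$, it writes $\tfrac{1}{2}\P(\max_v \tilde{\phi}^n_v \le m_n-\lambda) = \P(\max_v \tilde{\phi}^n_v \le m_n-\lambda,\, X\le 0)$ and notes that on $\{X\le 0\}$ this event forces $\max_v \xi^n_v \le m_n-\lambda$, so the right side is at most $\P(\max_v \phi^n_v \le m_n-\lambda)$; this keeps the full rate $c^*$ at the cost of the factor $\bar{C}'=2\bar{C}$. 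Your version instead splits at the threshold $X=\lambda/2$ and union-bounds, which halves the exponential rate (you get $\min(c^*/2, c')$ rather than $c^*$ -- harmless, since the corollary only asserts existence of some positive rate) but has the minor advantage of using only the marginal law of $X$ and the Gaussian tail, not the independence of $X$ from $\tilde{\phi}^n$ (independence is still needed implicitly wherever the joint law of $(\tilde{\phi}^n, X)$ is pinned down, i.e.\ in Lemma \ref{lem-compbrwssbrw} itself, so nothing is truly gained in generality). Both are valid; yours is just a slightly more elaborate route to the same reduction.
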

\begin{proof}
	$$\frac{1}{2}\P( \max_{v \in T_n} \tilde{\phi}^n_v \leqslant m_n -\lambda ) = \P( \max_{v \in T_n} \tilde{\phi}^n_v \leqslant m_n -\lambda, X \leq 0  ) \leq \P( \max_{v \in T_n} \phi^n_v \leqslant m_n -\lambda ).$$
	Now using (\ref{eq-BRWlefttailub}), and with $\bar{C}'= 2\bar{C}$ we arrive at (\ref{eq-ssBRWlefttailub}). 
\end{proof}

\section{Estimates on left tail and positivity}\label{sec:ssbrwtails} 

From the equation (\ref{eq-poscomp}) we understand that the probability of positivity for the branching random walk can be computed using bounds on the left tail of the maximum of $\tilde{\phi}^n_.$, a part of the switching sign branching random walk, as the left tail is heavily concentrated around the maximum. This motivates the following computations on the left tail of the maximum. 
\begin{lemma}\label{lem-lefttail}
	We call $c=1/\sqrt{2 \log d}$(where $m_n = \sqrt{2 \log d} n - \frac{3}{2\sqrt{2 \log d}} \log n$) to be the constant such that $|m_{n-c\lambda }- m_n+\lambda|\rightarrow 0$ as $n\rightarrow \infty$, where $\lambda=\lambda(n)=o(n)$ is positive. Then there exists constants $C', C'', K', K''$ independent of $n$ such that for sufficiently large $n$ we have:
	\begin{equation}\label{eq-lefttailssbrw}
	K' \exp(-K''d^{c\lambda})  \le \P(\max_{v \in T_n} \tilde{\phi}_v \le m_n - \lambda ) \leq C'\exp(-C''d^{c\lambda}) .
	\end{equation} 
\end{lemma} 

\begin{proof}
	We work with $\P(\max_{v \in T_n} \tilde{\phi}_v \le m_{n - c\lambda})$ as due to our definition of $c$, for sufficiently large $n$ this probability is close to $\P(\max_{v \in T_n} \tilde{\phi}_v \le m_n - \lambda )$. From \cite{BDZ14,DRZ15}, we can see that $\{ \max_{v \in T_n} \tilde{\phi}_v - m_n \}$ converges in distribution, as it is equivalent in distribution to a BRW, after adding the same independent Gaussian to all points. Hence  $\P(\max_{v \in T_n} \tilde{\phi}_v \le m_{n - c\lambda})$ and $\P(\max_{v \in T_n} \tilde{\phi}_v \le m_n - \lambda )$ converge to the same point.  We know that the BRW is a Gaussian field which is obtained by adding the same Gaussian to all vertices of an SSBRW. This helps us find bounds on lower and upper tails of maximum of SSBRW using results on convergence of maximum of BRW, as proved in \cite{aidekon}, \cite{BDZ14} etc. 
	
	
	We also force $\lambda$ to be such that $c\lambda$ is an integer. For other values of $\lambda$, we can adjust for the constants by looking at $\lceil c\lambda \rceil$ and $\lfloor c \lambda \rfloor$.
	We first consider the tree only up to the level $c\lambda$ and consider the cumulative sum of the Gaussian variables at these vertices till the level $c\lambda$. 
	We rename all these Gaussian variables at level $c\lambda$ of this new tree to be $A_1, A_2, \ldots, A_{d^{c\lambda}}$. Then each $A_i$ is essentially of the form $\sum_{j=1}^{c\lambda} Y_{j,a_j(v)}$ for some $v\in T_n$. We know that the definition in Section \ref{sec:ssbrwdef} of switching sign branching random walk model guarantees $\sum_{i=1}^{d^{c\lambda}} A_i = 0$. We consider the subtrees rooted at the vertex which has values $A_i$ and call its maximum to be $M_i$. These are trees of height $n-c\lambda$ and hence we have $\E M_i = m_{n - c \lambda} + O(1) ~~ \forall i$ and $M := \max_{v \in T_n} \tilde{\phi}_v= \max_{i=1}^{d^{c\lambda}} (M_i+A_i)$. We want to obtain bounds for the probability $\P(\max_{v \in T_n} \tilde{\phi}_v \leq m_{n-c\lambda})$. We condition on the values of $A_1, A_2, \ldots, A_{d^{c\lambda}}$ which in turn breaks down the required probability in a product form since the maxima for the $d^{c \lambda}$ subtrees are independent and have identical distributions. We have the following,   
	\begin{align*}
	\P(\max_{v \in T_n} \tilde{\phi}_v \leq m_{n-c\lambda} \mid A_1, A_2, \ldots, A_{d^{c\lambda}}) 
	= \P(\max_{i=1}^{d^{c\lambda}} (M_i + A_i) \leq m_{n-c\lambda} \mid A_1, A_2, \ldots, A_{d^{c\lambda}} ). 
	\end{align*}
	This can be further broken down from independence as, 
	\begin{align}\label{eq-ssbrw-brk}
	\P(\max_{i=1}^{d^{c\lambda}} (M_i + A_i) \leq m_{n-c\lambda} \mid A_1, A_2, \ldots, A_{d^{c\lambda}} )=\Prod_{i=1}^{d^{c\lambda}} \P(M_i + A_i \leq m_{n-c\lambda} \mid A_i ).\end{align}
	
	The right hand side of (\ref{eq-ssbrw-brk}) satisfies the following inequality: 
	$$\Prod_{i=1}^{d^{c\lambda}} \P(M_i + A_i \leq m_{n-c\lambda} \mid A_i ) \leq \Prod_{i: A_i > 0}^{d^{c\lambda}} \P(M_i \leq m_{n-c\lambda}-A_i \mid A_i ),$$
	which happens since for the cases where $A_i < 0$ we bound the terms in the product by $1$. The right hand side of the last inequality is further bounded by: 
	\begin{equation}\label{eq-ssbrw-ltbd}
	(\bar{C'}\vee1)^{d^{c\lambda}}  \exp(-c^* \Sum_{i=1}^{d^{c\lambda}}A_i^+) =   \exp({d^{c\lambda}} \log (\bar{C'}\vee1) -c^* \Sum_{i=1}^{d^{c\lambda}}A_i^-) 
	\end{equation}
	In the final two steps we first make use of (\ref{eq-ssBRWlefttailub}), followed by the fact that $\sum_i A_i =0$. 
	We consider two different cases:
	\begin{itemize}
		\item[1)] When $A_i^- \leq 2\bar{A}$ for at least $d^{c\lambda}/2$ many $i$, where $\bar{A}$ is a positive constant to be chosen later on.
		\item[2)] When 1) doesn't happen and so then $\sum_{i=1}^{d^{c\lambda}} A_i^- \geq \bar{A} d^{c\lambda}$.
	\end{itemize}
	Even for the first case we break it down into two parts according to whether $\sum_{i=1}^{d^{c\lambda}} A_i^- \geq \bar{A} d^{c\lambda}$ or not. 
	
	
	When 2) holds then clearly (\ref{eq-ssbrw-ltbd}) is bounded by $\exp(- (c^* \bar{A} - \log (\bar{C'}\vee1)) d^{c\lambda})$, and now on choosing $\bar{A}$ such that $c^* \bar{A} - \log (\bar{C'}\vee1)> 0$ we have $c^{**}>0$ such that our required term is bounded by $\exp(- c^{**} d^{c\lambda})$. 
	
	In the other case also $$\P(M_i \leq m_{n-c\lambda}-A_i \mid A_i ) \leq \P(M_i \leq m_{n-c\lambda}+2\bar{A})$$ for those $i$ for which $A_i^- \leq 2\bar{A}$. From the lower bound on the right tail of the maximum of a branching random walk (see \cite[eq. (2.5.11)]{ofernotes}), we can find $p$, independent of $n$, where $0<p<1$ such that $\P(M_i \leq m_{n-c\lambda}+2\bar{A})<p$ for all sufficiently large $n$  and so the probability $\Prod_{i=1}^{d^{c\lambda}} \P(M_i + A_i \leq m_{n-c\lambda} \mid A_i )$ is bounded by $\exp(-\bar{c}d^{c\lambda})$. Now from this $\bar{c}$ and $c^{**}$ we select one unified $C', C''$ so that 
	$$\P(\max_{v \in T_n} \tilde{\phi}_v \leq m_{n-c\lambda})\leq C'\exp(-C''d^{c\lambda}) .$$
	Again for the lower bound we have 
	
	\begin{eqnarray*}
		\P(\max_{v \in T_n} \tilde{\phi}_v \le m_{n - c\lambda} ) &=& \int_{\mathbb{R}^{d^{c\lambda}}} \Prod_{i=1}^{d^{c\lambda}} \P(M_i \leq m_{n-c\lambda}-  A_i )  dP_{A_1,\ldots, A_{d^{c\lambda}}} \\
		&\ge& (\bar{p})^{d^{c\lambda}} \int_{[-1,1]^{d^{c\lambda}}} dP_{A_1,\ldots, A_{d^{c\lambda}}}, 
	\end{eqnarray*}
	where $\bar{p}$ is chosen to be a lower bound on $\P(M_i \leq m_{n-c\lambda}-1)$ for all sufficiently large $n$, which can be obtained from using convergence results on maximum of branching random walk. 
	Now $\{A_1, A_2, \ldots, A_{d^{c\lambda}}\}$ are obtained by linear combinations of $\{Y_{i,j}: j=1,2,\ldots, d^i, i=1,2,\ldots, c\lambda \}$, each of which Gaussian random variables, each being obtained from $c\lambda$ many of them(which are also independent), and a way to make all $A_i$'s in the range $[-1,1]$ is to make absolute value of the contribution at the $j$th level, (i.e. $Y_{j,a_v(j)}$) to be bounded by $\frac{1}{10(c\lambda+1-j)^2}$, for $j=1, 2, \ldots, c\lambda$. Each of these $Y_{j,a_v(j)}$s come from translation of independent standard Gaussians, which we put bounds on. So the independent standard Gaussians for level $j$ are bounded by $\frac{1}{10 \sqrt{d}(c \lambda +1-j)^2}$.
	So this gives, for some constant $K>0$,  
	\begin{eqnarray*}
		\P(\max_{v \in T_n} \tilde{\phi}_v \le m_{n - c\lambda} )
		\ge (\bar{p})^{d^{c\lambda}} \prod_{j=1}^{c\lambda} \left( \frac{1}{10 K \sqrt{d}(c \lambda +1-j)^2} \right)^{(d-1)d^{j-1}} . 
	\end{eqnarray*}
	We take a logarithm of this term above, which leads to a sum. Approximation of the sum, as shown below in Lemma \ref{lem-sumapprox}, proves (\ref{eq-lefttailssbrw}).
\end{proof}

\begin{lemma}\label{lem-sumapprox}
	$\sum_{j=1}^{n} (\log |n+1-j|) d^j$ is of order $\Theta(d^n)$.
\end{lemma}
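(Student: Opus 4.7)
The natural move is to reindex the sum so that the dominant exponential factor is pulled out cleanly. Setting $k = n+1-j$, the sum becomes
\begin{equation*}
\sum_{j=1}^{n} (\log|n+1-j|)\, d^{j} \;=\; \sum_{k=1}^{n} (\log k)\, d^{n+1-k} \;=\; d^{\,n+1} \sum_{k=1}^{n} \frac{\log k}{d^{k}},
\end{equation*}
where the $k=1$ term vanishes because $\log 1 = 0$. Thus the question reduces to showing that $\sum_{k=1}^{n} (\log k)/d^{k}$ is bounded above and below by positive constants independent of $n$, which will give the claimed order $d^{n}$ (up to the constant factor $d$).

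For the upper bound I would simply observe that since $d \geq 2$ (the tree is $d$-ary with $d\ge 2$), the tail series $\sum_{k=1}^{\infty} (\log k)/d^{k}$ converges: for instance, $\log k \leq \sqrt{d}^{\,k/2}$ eventually, so comparison with the geometric series $\sum d^{-k/2}$ gives absolute convergence. Hence there is a constant $C = C(d) < \infty$ with $\sum_{k=1}^{n} (\log k)/d^{k} \leq C$ for all $n$, yielding $\sum_{j=1}^{n} (\log|n+1-j|)\, d^{j} \leq C\,d^{\,n+1}$.

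For the lower bound, it is enough to keep a single term: as soon as $n \geq 2$, the $k=2$ summand contributes $(\log 2)/d^{2} > 0$, so $\sum_{k=1}^{n} (\log k)/d^{k} \geq (\log 2)/d^{2}$, giving $\sum_{j=1}^{n} (\log|n+1-j|)\, d^{j} \geq \bigl((\log 2)/d\bigr)\,d^{\,n}$. Combined with the upper bound, the sum is $\Theta(d^{n})$, as claimed.

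There really is no hard step here; the whole argument is a geometric-series estimate after the reindexing trick. The only point worth double-checking is the convergence of $\sum (\log k)/d^{k}$, which is immediate from any standard comparison. This is also exactly the asymptotic shape one needs to plug back into the preceding lower-bound calculation, since the logarithm of the product $\prod_{j=1}^{c\lambda}\bigl(10K\sqrt{d}(c\lambda+1-j)^{2}\bigr)^{(d-1)d^{j-1}}$ is (up to constants) $2(d-1)\sum_{j=1}^{c\lambda}\log(c\lambda+1-j)\,d^{\,j-1}$, which by the lemma is $O(d^{\,c\lambda})$, matching the exponent in the stated lower bound of \eqref{eq-lefttailssbrw}.
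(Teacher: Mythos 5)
Your proof is correct, and it takes a slightly different (and cleaner) route than the paper. The paper bounds $\log|n+1-j|\le |n+1-j|$ and then evaluates the arithmetico-geometric sum $\sum_{j=1}^{n}(n+1-j)d^{j}$ explicitly, concluding the upper bound is of order $d^{n}$ and remarking that the lower bound ``follows easily.'' You instead reindex with $k=n+1-j$ to write the sum as $d^{\,n+1}\sum_{k=1}^{n}(\log k)/d^{k}$ and invoke convergence of $\sum_{k\ge 1}(\log k)/d^{k}$ for $d\ge 2$; this avoids the lossy bound $\log x\le x$ and the explicit summation, gives in fact the sharper asymptotic $\bigl(\sum_{k\ge 1}(\log k)d^{-k}\bigr)d^{\,n+1}(1+o(1))$, and makes the lower bound explicit (the $k=2$ term), which the paper leaves to the reader. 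The only cosmetic caveats: your comparison ``$\log k\le \sqrt{d}^{\,k/2}$ eventually'' works but any bound $\log k\le d^{k/2}$ (valid for all large $k$ since $d\ge 2$) would read more transparently, and the lower bound of course only holds for $n\ge 2$ (at $n=1$ the sum is $0$), which is harmless since the statement is asymptotic.
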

\begin{proof} We begin with an upper bound on the sum. We use a trivial bound of $\log  |x| \le |x|$ for $|x| \geq 1$, followed by a few series summations. 
	\begin{eqnarray*}
		\sum_{j=1}^{n} (\log |n+1-j|) d^j &\le& \sum_{j=1}^{n} ( |n+1-j|) d^j  \\
		&=& (n+1) \sum_{j=1}^{n} d^j  -  \sum_{j=1}^{n} j d^j  \\
		&=& (n+1) \frac{d^{n+1}-d}{d-1} - \frac{nd^{n+2}- (n+1)d^{n+1} +d}{(d-1)^2} \\
		&=& \frac{d^{n+2}- (n+1)d^{2} +nd}{(d-1)^2} 
	\end{eqnarray*}
	This gives an upper bound of order $d^n$. The lower bound follows easily. 
\end{proof}
We now look back into our question of the branching random walk being positive at all vertices. We know that the maximum of the BRW is heavily concentrated around the expected maximum. Using this fact, in a neighbourhood around the maximum, we further try to maximise the probability of the maximum being there. This point where this occurs will also roughly be the typical value of a vertex. This motivates the proof of Theorem \ref{thm-positivityBRW}.  
\begin{proof}[Proof of Theorem \ref{thm-positivityBRW}]
	{\it Upper bound: }	From (\ref{eq-poscomp}) we have an upper-bound on the probability of positivity based on the switching sign branching random walk. We optimise this bound by first raising the mean to a level and look at the compensation we have to apply correspondingly. We optimise over these two to obtain our bound. We apply a similar strategy for obtaining the lower bound as well. We recall (\ref{eq-poscomp}) at this juncture along with $X$, and the variance of $X$ to be $\sigma_{d,n}^2 = \frac{1-d^{-n}}{d-1}$. Let us recall the event $\Lambda_n^+$ defined before as $\{ \phi_v^n \ge 0 ~ \forall v \in T_n\}$.
	In (\ref{eq-poscomp}), we condition on the value of $X$ to obtain the following:
	$$\P(\Lambda_n^+) = \frac{1}{\sigma_{d,n}\sqrt{2\pi}}\int_{-\infty}^{\infty} \P( \max_{v \in T_{n}} \tilde{\phi}_v^{n}\le x)\exp({-x^2/2\sigma^2_{d,n}})dx$$
	Instead of integrating over $x$ we may as well replace $x$ by $m_{n} - \lambda$, and then integrate over $\lambda$. We split the integral into three parts, first with $\{ -\infty < \lambda \le 0\} $, second with $\{\frac{3}{c}\log_d n \le \lambda < \infty \}$ and the rest. From tail estimates of a Gaussian, the first part is bounded by $O(\exp(-\frac{1}{2\sigma^2_{d,n}}(m_n - \lambda')^2))$. From (\ref{eq-lefttailssbrw}), we know that the second part is bounded by $C' \exp(-C''n^3)$. The rest part has an upper bound:  
	\begin{equation}\label{eq-uboptimizer}
	\frac{C'}{\sigma_{d,n}\sqrt{2\pi}} \int_{0}^{\frac{3}{c}\log_d n } \exp(-C''d^{c\lambda})\exp({-(m_{n} - \lambda)^2/2\sigma^2_{d,n}})d\lambda. 
	\end{equation}
	We maximize the integrand in (\ref{eq-uboptimizer}), over the range of the integral, to obtain an optimal $\lambda$, say $\lambda'$, which is of order $\log n$. It satisfies the equation 
	$$m_{n}-\lambda' = \sigma^2_{d,n}C'' c d^{c\lambda'} \log d .$$
	Recalling $m_n=\sqrt{2 \log d}n - \frac{3}{2\sqrt{2 \log d}}\log n$ and $c=1/\sqrt{2 \log d}$ from Lemma \ref{lem-lefttail}, we can see that, 
		\begin{equation*}
		\dfrac{2n - \frac{3 \log n}{2 \log d} - \lambda'/\sqrt{2 \log d}}{d^{c \lambda'}} \underset{n \to \infty}{\longrightarrow} \frac{C''}{d-1}.
		\end{equation*}
	This implies that $\lambda'=\frac{\sqrt{2} \log n}{\sqrt{\log d}}+O(1)$. Plugging in we obtain an upper bound as in (\ref{eq-positivity}).   
	
	{\it Lower bound:}
	Again recalling (\ref{eq-lefttailssbrw}) we obtain that 
	{$$ \P(\Lambda_n^+) \ge \frac{K'}{\sqrt{2\pi}\sigma_{d,n}}\int_{\lambda'}^{\lambda'+1}  e^{-K''d^{c\lambda}} \exp({-(m_n - \lambda)^2/2\sigma^2_{d,n}})d\lambda .$$}
	The integrand here is infact a decreasing function of $\lambda$ in the range $\lambda \in [\lambda' , \lambda' + 1 ]$, where $\lambda'$ is from the first part of the proof. This gives a lower bound of 
	$$\frac{K'}{\sqrt{2\pi}\sigma_{d,n}}  e^{-K''d^c d^{c\lambda'}} \exp({-(m_n - \lambda' - 1)^2/2\sigma^2_{d,n}}).$$
	So, we obtain the required lower bound in (\ref{eq-positivity}). 
\end{proof}
\section{Expected value of a typical vertex under positivity}\label{sec:condexpproof}
\begin{proof}[{Proof of Theorem \ref{thm-typicalpos}}]
	We want to compute $\E\left(\frac{S_n}{d^n} \mid \Lambda^+_n \right)$. Due to Lemma \ref{lem-compbrwssbrw}, this is equivalent to computing $\E\left(\dfrac{\sum_{v=1}^{d^n} \xi_v^n}{d^n} \mid \xi^n_u \ge 0 ~ \forall ~ u \in T_n \right)= \E\left(X \mid \max_{ v \in T_n} \tilde{\phi}_v^n \le X \right)$. The conditioning events $\{ \xi^n_u \ge 0 ~ \forall ~ u \in T_n\}$ and $\{ \max_{ v \in T_n} \tilde{\phi}_v^n \le X\}$ are equivalent, since $(\tilde{\phi}_v^n)_{v \in T_n}$ is symmetric around $0$. Also,  $\frac{\sum_{v=1}^{d^n} \xi_v^n}{d^n}$ equals $X$ since $\sum_{v=1}^{d^n} \tilde{\phi}_v^n=0$. So the previous equality holds. 
	
	{\bf Upper Bound:} 
	We first split the expectation into two parts, one concerning the contribution of the right tail in the integral and the rest. We aim to show that the contribution of the right tail is negligible, thereby implying that the main contribution is from the rest, which gives an upper bound on the expectation. The tail here is motivated by the maximizer in Theorem \ref{thm-positivityBRW}. 
	\begin{eqnarray*}
		\E\left(X \mid \max_{ v \in T_n} \tilde{\phi}_v^n \le X \right) &=& \frac{1}{\sqrt{2\pi}\sigma_{d,n}}\int_{-\infty}^{\infty} x e^{-x^2/2\sigma^2_{d,n}} \dfrac{\P(\max_{v \in T_n} \tilde{\phi}_v^n \le x)}{\P(\max_{v \in T_n} \tilde{\phi}_v^n \le X)} dx \\
		&=& \frac{1}{\sqrt{2\pi}\sigma_{d,n}}\int_{-\infty}^{m_n-a\log n } x e^{-x^2/2\sigma^2_{d,n}} \dfrac{\P(\max_{v \in T_n} \tilde{\phi}_v^n \le x)}{\P(\max_{v \in T_n} \tilde{\phi}_v^n \le X)} dx \\
		&+& \frac{1}{\sqrt{2\pi}\sigma_{d,n}}\int_{m_n-a\log n }^{\infty} x e^{-x^2/2\sigma^2_{d,n}} \dfrac{\P(\max_{v \in T_n} \tilde{\phi}_v^n \le x)}{\P(\max_{v \in T_n} \tilde{\phi}_v^n \le X)} dx 
	\end{eqnarray*}
	We denote the first term by $J_1$ and the next one by $J_2$. We first want to show that the contribution of $J_2$ in the conditional expectation is negligible. We use a trivial upper bound on the tail probability in the numerator. Then we compute the integral which is the tail expectation of a normal.  
	\begin{eqnarray*}
		J_2 &\le& \frac{1}{\sqrt{2\pi}\sigma_{d,n}}\int_{m_n-a\log n }^{\infty} x e^{-x^2/2\sigma^2_{d,n}} \dfrac{1}{\P(\max_{v \in T_n} \tilde{\phi}_v^n \le X)} dx \\
		&=& \frac{1}{\sqrt{2\pi}\sigma_{d,n}\P(\max_{v \in T_n} \tilde{\phi}_v^n \le X)}\int_{m_n-a\log n }^{\infty} x e^{-x^2/2\sigma^2_{d,n}} dx \\
		&=& \frac{\sigma_{d,n}e^{-(m_n-a\log n)^2/2\sigma^2_{d,n}}}{\sqrt{2\pi}\P(\max_{v \in T_n} \tilde{\phi}_v^n \le X)}  
	\end{eqnarray*}
	So we end up showing that contribution from the right tail is negligible. We now move on to the rest part and obtain an upper bound for it. We use a general upper bound on $x$ from the range of the integral, which we can do since the integral exists and is finite by the fact that absolute expectation of a normal exists. 
	\begin{eqnarray*}
		J_1 &\le& \frac{m_n-a\log n }{\sqrt{2\pi}\sigma_{d,n}}\int^{m_n-a\log n }_{-\infty}  e^{-x^2/2\sigma^2_{d,n}} \dfrac{\P(\max_{v \in T_n} \tilde{\phi}_v^n \le x)}{\P(\max_{v \in T_n} \tilde{\phi}_v^n \le X)} dx \\
		&\le& \frac{m_n-a\log n }{\sqrt{2\pi}\sigma_{d,n}}\int^{\infty }_{-\infty}  e^{-x^2/2\sigma^2_{d,n}} \dfrac{\P(\max_{v \in T_n} \tilde{\phi}_v^n \le x)}{\P(\max_{v \in T_n} \tilde{\phi}_v^n \le X)} dx \\ 
		&=& m_n-a\log n
	\end{eqnarray*}
	From (\ref{eq-positivity}) it is clear that on choosing $a$ such that $a\log n \le \lambda'$ the upper bound on the conditional expectation is $m_n - a \log n$. Hence we can choose $a=\frac{\sqrt{2}}{\sqrt{\log d}}$.
	
	{\bf Lower Bound: } We apply a similar technique as in case of the upper bound, the only difference being that we look at the left tail instead, motivated by the left tail of the maximum of the Gaussian process. 
	\begin{eqnarray*}
		\E\left(X \mid \max_{ v \in T_n} \tilde{\phi}_v^n \le X \right) &=& \frac{1}{\sqrt{2\pi}\sigma_{d,n}}\int_{-\infty}^{\infty} x e^{-x^2/2\sigma^2_{d,n}} \dfrac{\P(\max_{v \in T_n} \tilde{\phi}_v^n \le x)}{\P(\max_{v \in T_n} \tilde{\phi}_v^n \le X)} dx \\ 
		&=& \frac{1}{\sqrt{2\pi}\sigma_{d,n}}\int_{-\infty}^{m_n-\frac{3}{c}\log_d n} x e^{-x^2/2\sigma^2_{d,n}} \dfrac{\P(\max_{v \in T_n} \tilde{\phi}_v^n \le x)}{\P(\max_{v \in T_n} \tilde{\phi}_v^n \le X)} dx \\
		&+& \frac{1}{\sqrt{2\pi}\sigma_{d,n}}\int_{m_n-\frac{3}{c}\log_d n}^{\infty} x e^{-x^2/2\sigma^2_{d,n}} \dfrac{\P(\max_{v \in T_n} \tilde{\phi}_v^n \le x)}{\P(\max_{v \in T_n} \tilde{\phi}_v^n \le X)} dx
	\end{eqnarray*}
	We denote the first term by $I_1$ and the second by $I_2$.
	
	When $x \in (-\infty, m_n-\frac{3}{c}\log_d n]$ then $ \P(\max_{v \in T_n} \tilde{\phi}_v^n \le x) \le C' \exp(-C''n^3)$ following (\ref{eq-lefttailssbrw}). Also we have a lower bound on the probability of positivity, which gives the following bounds on $I_1$ and $I_2$. 
	\begin{eqnarray*}
			\vert I_1 \vert \leqslant \underset{\sim}{C} e^{\frac{1}{2\sigma^2_{d,n}}(m_n - \lambda')^2+ d^{c\lambda'}(\log \lambda'  - \log \bar{p}/K) - C''n^3} \int_{-\infty}^{\infty} \mid x \mid e^{-x^2/2\sigma^2_{d,n}} dx 
		\end{eqnarray*}
		where $\underset{\sim}{C}>0$ is a constant not depending on $n$. This shows that this term is negligible. 
	Further, 
	\begin{eqnarray*}
		I_2 &\ge& \left(m_n-\frac{3}{c}\log_d n \right)\frac{1}{\sqrt{2\pi}\sigma_{d,n}}\int_{m_n-\frac{3}{c}\log_d n}^{\infty}  e^{-x^2/2\sigma^2_{d,n}} \dfrac{\P(\max_{v \in T_n} \tilde{\phi}_v^n \le x)}{\P(\max_{v \in T_n} \tilde{\phi}_v^n \le X)} dx  \\
		&=& \left(m_n-\frac{3}{c}\log_d n \right)\frac{1}{\sqrt{2\pi}\sigma_{d,n}}\int_{-\infty}^{\infty}  e^{-x^2/2\sigma^2_{d,n}} \dfrac{\P(\max_{v \in T_n} \tilde{\phi}_v^n \le x)}{\P(\max_{v \in T_n} \tilde{\phi}_v^n \le X)} dx - o(1)  \\
		&=& m_n-\frac{3\sqrt{2}}{\sqrt{\log d}}\log n - o(1). 
	\end{eqnarray*} 
	In the last step we have used the value of $c=\frac{1}{\sqrt{2 \log d}}$, as fixed before in Lemma \ref{lem-lefttail}. 
\end{proof}

\appendix

\section{Acknowledgement}



\noindent The author would like to thank his graduate advisor Prof. Jian Ding for suggesting the problem and for many helpful discussions on the same.



%
%
%
%

\end{document}